\theoremstyle{plain}
\newtheorem{thm}[subsection]{Theorem}
\newtheorem{prop}[subsection]{Proposition}
\newtheorem{cor}[subsection]{Corollary}
\theoremstyle{definition}
\newtheorem{rk}[subsection]{Remark}
\newtheorem{ex}[subsection]{Example}
\newtheorem{conj}[subsection]{Conjecture}
\newtheorem{question}[subsection]{Question}
\numberwithin{equation}{section}
\newcommand{\OO}{{\mathcal O}}
\newcommand{\C}{\mathbb{C}}
\newcommand{\PP}{\mathbb{P}}
\begin{document}
\date{March 21, 2021}

\title [On the Bounded Negativity Conjecture ]
{On the Bounded Negativity Conjecture and singular plane curves}

\author[Alexandru Dimca]{Alexandru Dimca$^{1}$}
\address{Universit\'e C\^ ote d'Azur, CNRS, LJAD, France and Simion Stoilow Institute of Mathematics,
P.O. Box 1-764, RO-014700 Bucharest, Romania}
\email{dimca@unice.fr}

\author[Brian Harbourne]{Brian Harbourne$^2$}
\address{Math Department\\
University of Nebraska--Lincoln\\
Lincoln, NE 68588 USA}
\email{brianharbourne1@unl.edu}

\author[Gabriel Sticlaru]{Gabriel Sticlaru}
\address{Faculty of Mathematics and Informatics,
Ovidius University
Bd. Mamaia 124, 900527 Constanta,
Romania}
\email{gabriel.sticlaru@gmail.com }

\thanks{\vskip0\baselineskip
\vskip-\baselineskip
\noindent $^1$This work has been partially supported by the Romanian Ministry of Research and Innovation, CNCS - UEFISCDI, grant PN-III-P4-ID-PCE-2020-0029, within PNCDI III.
\vskip0\baselineskip
\noindent $^2$The second author benefitted from the support of Simons Foundation grant \#524858.}

\subjclass[2010]{Primary 14H50; Secondary 14B05, 32S05}

\keywords{ Bounded negativity conjecture, plane curves, singularities, rational curves, ordinary singularities}

\begin{abstract} 
There are no known failures of Bounded Negativity in characteristic 0.
In the light of recent work showing the Bounded Negativity Conjecture fails in positive characteristics
for rational surfaces, we propose new characteristic free conjectures as a replacement.
We also develop bounds on numerical characteristics of curves  
constraining their negativity. For example, we show that the $H$-constant of a rational curve
$C$ with at most $9$ singular points satisfies $H(C)>-2$ regardless of the characteristic. 
\end{abstract}
 
\maketitle


\section{Introduction} 

Except where explicitly noted, we work over an arbitrary algebraically closed field.
Also, when we speak of the singular points of a plane curve,
we mean including all infinitely near singular points, if any, unless we specify otherwise.

The following folklore conjecture, which has been traced back at least to F. Enriques (see \cite{BHKMS}), is still open.

\begin{conj}
\label{c0}
 (Bounded Negativity Conjecture). Let $X$ be any smooth complex projective
surface. Then there is a bound $B_X$ such that for every reduced curve $C \subset  X$ we have
$C^2 \geq B_X$.
\end{conj}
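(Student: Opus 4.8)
The plan is to control the self-intersection of a reduced curve $C = \sum_i C_i$ directly through the intersection form on the N\'eron--Severi group, rather than component by component. The two structural inputs are that the cross terms $C_i \cdot C_j$ with $i \neq j$ are all nonnegative, and that by the Hodge index theorem the intersection form has signature $(1, \rho(X)-1)$, so it is negative definite on the orthogonal complement of any class of positive self-intersection. Together these are exactly what should prevent a reduced curve from accumulating unbounded negativity: at most $\rho(X)-1$ pairwise orthogonal irreducible negative curves can coexist, and positive cross terms between non-orthogonal components offset their individual negativity (this is already the mechanism distinguishing $\PP^2$ blown up at $\le 9$ very general points from the blow-up at $\ge 10$). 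I would therefore aim to reduce Conjecture~\ref{c0} to a uniform lower bound on $C_i^2$ for \emph{irreducible} $C_i$, organized according to the Kodaira dimension $\kappa(X)$ of a minimal model.

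With that reduction in hand, the cases where $\pm K_X$ carries positivity should fall out. When $-K_X$ is $\Q$-effective, as for del Pezzo and anticanonical rational surfaces, or when $X$ admits a surjective endomorphism that is not an isomorphism, as for abelian and toric surfaces, the usual pullback and intersection estimates give the bound. For $\kappa(X) = 0$ a minimal model has $K_X$ numerically trivial, so adjunction $2 p_a(C) - 2 = C^2 + K_X \cdot C$ forces $C^2 = 2 p_a(C) - 2 \geq -2$ for irreducible $C$; and for $\kappa(X) = 1$ the negative irreducible curves are controlled through the components of the fibers of the elliptic fibration.

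The hard part will be the surfaces of general type and, above all, the behavior of bounded negativity under blowing up. On a minimal surface of general type $K_X$ is nef and big, and writing an irreducible $C$ as $a K_X + D$ with $D \cdot K_X = 0$ in $\mathrm{NS}(X) \otimes \Q$ gives $C^2 = a^2 K_X^2 + D^2$ with $a \geq 0$ and $D^2 \leq 0$ by Hodge index; the obstruction is that $D^2$ need not be bounded below, so this splitting alone does not close the argument. The genuinely open difficulty, and the reason Conjecture~\ref{c0} is still a conjecture, is the passage to blow-ups: even for $\PP^2$ blown up at ten or more very general points no uniform lower bound on the self-intersections of irreducible curves is known, and the question is entangled with Nagata-type and SHGH-type statements on linear systems. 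I would not expect a complete proof along these lines; the realistic outcome is to isolate precisely this blow-up obstruction, which is exactly what motivates replacing the bound $C^2 \geq B_X$ by the $H$-constant and by the characteristic-free reformulations proposed in this paper, all the more so since in positive characteristic the conjecture is now known to fail for rational surfaces.
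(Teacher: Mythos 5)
This statement is the Bounded Negativity Conjecture itself; the paper does not prove it, and indeed states explicitly that it is still open (and, by \cite{CV}, false in positive characteristic even for rational surfaces). So there is no proof in the paper to compare yours against, and your proposal --- as you yourself concede in its final sentences --- is not a proof but a survey of the known partial results and of the central obstruction. On that level it is accurate: boundedness is known when $-K_X$ is $\Q$-effective, when $X$ carries a non-isomorphic surjective endomorphism, and on minimal surfaces of Kodaira dimension $0$ or $1$ via adjunction and the fiber-component analysis; the genuinely open cases are surfaces of general type and, above all, blow-ups, where no birational invariance of bounded negativity is known. This matches the framing of the paper, whose whole point is to replace the conjecture by $H$-constant bounds and characteristic-free multiplicity-sequence conjectures rather than to prove it.

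One step in your sketch is stated too optimistically and would fail as written: the reduction from reduced to irreducible curves. Nonnegativity of the cross terms $C_i\cdot C_j$ gives only $C^2=\sum_i C_i^2+2\sum_{i<j}C_i\cdot C_j\geq n\,b_X$ for a curve with $n$ components, and $n$ is unbounded, so this alone yields nothing. The actual equivalence of Conjectures \ref{c0} and \ref{c0irr} is \cite[Proposition 5.1]{BHKMS}, and it requires the Hodge-index ingredient you mention --- that a surface carries only boundedly many irreducible curves of self-intersection below any fixed negative threshold --- assembled into a genuine counting argument; it is a proposition, not an observation. With that caveat, your text is a reasonable account of why the conjecture is hard, but it neither proves the statement nor could be expected to, since the statement is open.
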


An equivalent formulation (see \cite[Proposition 5.1]{BHKMS}) is as follows.
 
\begin{conj}
\label{c0irr}
(Bounded Negativity Conjecture for irreducible curves). Let $X$ be any smooth complex projective
surface. Then there is a bound $b_X$ such that for every reduced irreducible curve $C \subset X$ we
have $C^2 \geq b_X$.
\end{conj}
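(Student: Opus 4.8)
The plan is to reduce $C^2 \ge b_X$ to a genus-and-canonical estimate by adjunction and then to control the canonical term through the structure of $X$. For any reduced irreducible curve $C \subset X$ the adjunction formula gives $C^2 + K_X\cdot C = 2p_a(C)-2$, and since the arithmetic genus of a reduced irreducible curve satisfies $p_a(C)\ge 0$, we obtain at once
\[
C^2 \;\ge\; -2 - K_X\cdot C \;=\; -2 + (-K_X)\cdot C .
\]
Hence it suffices to bound $K_X\cdot C$ from above by a constant depending only on $X$. This is immediate when $-K_X$ is nef: then $(-K_X)\cdot C \ge 0$ for every effective $C$, so $b_X=-2$ works. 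This settles $\mathrm{K3}$, Enriques, abelian, bielliptic and del Pezzo surfaces in one stroke, and the same bound holds on any minimal surface of Kodaira dimension $0$, where $K_X$ is numerically trivial and $C^2 = 2p_a(C)-2 \ge -2$.

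For an arbitrary $X$ I would pass to the Enriques--Kodaira classification via a minimal model $\pi\colon X \to X_{\min}$. On $X_{\min}$ either $-K_{X_{\min}}$ is nef (ruled and del Pezzo cases) or $K_{X_{\min}}$ is nef (Kodaira dimension $\ge 0$), so the image $\pi_*C$ is controlled by the easy cases above; the problem is then entirely concentrated in how $C$ meets the \emph{finitely many} exceptional curves of $\pi$ and, when $\kappa(X)\ge 1$, in the size of $K_X\cdot C$. One would try to write $K_X = \pi^*K_{X_{\min}} + E$ with $E$ an effective combination of the exceptional divisors, and bound $K_X\cdot C$ by combining the nefness on $X_{\min}$ with the multiplicities of $C$ along the (infinitely near) centers of $\pi$.

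The hard part -- and this is exactly why the assertion is stated as a conjecture rather than a theorem -- is that this last bound can fail to be uniform. The naive tool, the Hodge index theorem, yields only the upper estimate $C^2 \le (A\cdot C)^2/A^2$ for an ample $A$, which is useless for bounding negativity from below; and once $X$ is a blow-up at a special (possibly infinitely near) configuration, the self-intersections of irreducible curves can decrease without bound, so neither $p_a(C)$ nor $(-K_X)\cdot C$ alone is controlled. For surfaces of general type, and already for blow-ups of $\PP^2$ at many points, no uniform upper bound on $K_X\cdot C$ is known, so the reduction above stops short of a proof. Removing this obstruction requires a genuinely new input: a uniform bound on the number and the multiplicities of the singular points of the plane models of these most-negative curves -- which is precisely the numerical and $H$-constant machinery developed in the remainder of the paper. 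I therefore expect no unconditional proof in full generality and would present the argument above as reducing Conjecture~\ref{c0irr} to bounded negativity for families of curves with $C^2 \to -\infty$.
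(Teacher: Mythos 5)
You cannot be graded against the paper's own proof here, because the paper has none: the statement is an open conjecture, and the paper's only ``argument'' concerning it is the remark that it is equivalent to Conjecture~\ref{c0} via \cite[Proposition 5.1]{BHKMS}. Your concluding admission that no unconditional proof is available is therefore exactly right, and your adjunction step is sound as far as it goes: for reduced irreducible $C$ one has $p_a(C)\geq 0$, so $C^2\geq -2+(-K_X)\cdot C$, which gives $b_X=-2$ whenever $-K_X$ is nef, and $C^2\geq -2$ directly on minimal surfaces of Kodaira dimension $0$ where $K_X$ is numerically trivial. If one wanted a genuine proof task extracted from this statement, it would be the cited equivalence between Conjectures~\ref{c0} and~\ref{c0irr} (bounding $C^2$ for reduced $C$ given a bound for irreducible $C$), which your write-up does not address.

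Two factual slips in your reduction are worth flagging. First, it is not true that $-K_{X_{\min}}$ is nef in the ruled case in general: on a geometrically ruled surface over a base $B$ of genus $g\geq 1$, e.g.\ $X=\PP^1\times B$ with $g\geq 2$ and a section $\sigma$ with $\sigma^2=0$, one has $(-K_X)\cdot\sigma=2-2g<0$. Bounded negativity still holds on such minimal ruled surfaces, but by direct analysis of the cone of curves, not by your nefness argument. Second, your assertion that once $X$ is a blow-up at a special (possibly infinitely near) configuration ``the self-intersections of irreducible curves can decrease without bound'' is known only in positive characteristic --- this is the content of \cite{CV}, which the paper invokes precisely to motivate its characteristic-free reformulations. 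Over $\C$ no such example is known; if your sentence were true as stated, it would refute Conjecture~\ref{c0irr} rather than explain why it is hard. What you can correctly say is that no uniform upper bound on the relevant multiplicities is \emph{known}, which is indeed the obstruction your reduction runs into, and which the paper's $H$-constant and multiplicity-sequence machinery (Theorem~\ref{rkLink} and Conjectures~\ref{c1.2}--\ref{c1.4}) is designed to probe.
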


It has been known for a long time that counterexamples occur in positive characteristics 
(see \cite[Exercise V.1.10]{Hr}), and recently counterexamples have been found even for 
rational surfaces \cite{CV} and hence every smooth projective surface in positive characteristic
is birational to a surface for which Bounded Negativity fails. This raises the question
of whether a characteristic free formulation of Bounded Negativity can be given.

As a step in this direction, we will consider the case of $X=\PP^2$.
The failure of Bounded Negativity for rational surfaces implies the existence
of plane curves with singular points of high multiplicity compared to the degree,
at a fixed set of points of the plane. What has never been observed in any characteristic 
but which has not been
shown to be impossible, are failures of what can be regarded Bounded Negativity in which one does not bound 
the number of singular points of multiplicity greater than 2 but only their multiplicities and the genus of
the curves. 

It is easy to construct curves of low genus with large numbers of double points (see Example \ref{xxx}).
But very few examples of reduced curves are known, in any characteristic, with bounded 
genus if we bound the number of double points and bound the multiplicities of the remaining singular points.
This motivates the following conjectures.
They are consistent with what we currently know.
We pose them both as a way of encapsulating our current state of knowledge,
and in the hope that studying them will enhance our understanding of bounded negativity.

To state them, we will call a positive integer sequence $(d,m_1,\ldots,m_r)$ with $r>0$
a {\it multiplicity sequence} if there is a reduced plane curve $C$ of degree $d$
having exactly $r$ singular points $p_1,\ldots,p_r$ (possibly infinitely near) and 
such that $m_i$ is the multiplicity of $p_i$. 
We say $C$ has {\it multiplicity bound} $m$ if $m_i\leq m$ for all $i$.

\begin{conj}
\label{c1.2}
Let $K$ be a fixed algebraically closed ground field.
There are only finitely many
multiplicity sequences arising for reduced plane curves over $K$ with multiplicity bound $m=3$,
having no points of multiplicity 2 and
whose components are all rational.
\end{conj}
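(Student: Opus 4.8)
The plan is to reduce the statement to an \emph{a priori} bound on the degree $d$, and then to extract such a bound from a Hirzebruch-type (logarithmic Bogomolov--Miyaoka--Yau) inequality. Under the hypotheses every singular point, proper or infinitely near, has multiplicity exactly $3$: a singular point has multiplicity at least $2$, the value $2$ is excluded, and $3$ is the multiplicity bound. Hence a multiplicity sequence is just $(d,3,\dots,3)$ and is determined by the pair $(d,r)$, so the assertion is equivalent to the claim that only finitely many such pairs occur. Recall that the delta-invariant of a plane-curve singularity is $\delta_p=\sum_q\binom{m_q}{2}$, the sum being over the point and its infinitely near points; since all multiplicities equal $3$, the total delta-invariant is $\delta=3r$. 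If $C$ has $s$ irreducible components, all rational, the genus formula for a reduced curve yields
\[
\binom{d-1}{2}=\delta-s+1=3r-s+1,\qquad 1\le s\le d .
\]
Thus $r=\bigl(\binom{d-1}{2}+s-1\bigr)/3$ is confined to the interval $[\binom{d-1}{2}/3,\ \binom{d}{2}/3]$, so any bound on $d$ bounds $r$ and forces finitely many sequences. Everything therefore rests on bounding $d$.

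Next I would pass to the resolution $\pi\colon\tX\to\PP^2$ obtained by blowing up all $r$ points, so that the strict transform $\widetilde{C}=dH-3\sum_iE_i$ (in the total-transform basis) is a disjoint union of smooth rational curves. A direct computation gives $\widetilde{C}^2=d^2-9r$ and $K_{\tX}\cdot\widetilde{C}=3(r-d)$, and adjunction for $\widetilde{C}$ merely reproduces the genus identity above; so the intersection theory of $\tX$ by itself cannot bound $d$. In characteristic $0$ the additional input should come from applying the logarithmic BMY inequality to a pair built from $(\tX,\widetilde{C})$: one sets up an orbifold structure (or an abelian branched cover, in the spirit of Hirzebruch) adapted to the triple points, computes the orbifold Chern numbers as explicit functions of $d$, $r$ and the covering data, and imposes $\overline{c}_1^{\,2}\le 3\,\overline{c}_2$, hoping to contradict the genus equality for large $d$.

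The main obstacle is already present in characteristic $0$: the naive choice of boundary does not close the argument. Taking $D=\widetilde{C}+\sum_iE_i$ (so that $\tX\setminus D\cong\PP^2\setminus C$) gives $(K_{\tX}+D)^2=(d-3)^2-r$, while for ordinary triple points $e(\tX\setminus D)=3-2s+2r$; the resulting inequality $(d-3)^2-r\le 3(3-2s+2r)$ reduces, using the genus identity, to $d^2+15d\ge 22s$, which holds automatically since $s\le d$. One therefore needs a strictly sharper inequality---fractional orbifold coefficients optimized at the triple points, or a well-chosen cyclic cover---and it is genuinely unclear that any such inequality forces $d$ to be bounded; this is the heart of the difficulty. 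The deeper obstruction is that the whole BMY method is unavailable in positive characteristic, precisely the setting in which bounded negativity fails \cite{CV}. A characteristic-free proof would presumably have to replace BMY either by a lifting/reduction-modulo-$p$ argument or by a purely combinatorial constraint on the proximity relations among the (exclusively triple) points, and I expect this positive-characteristic case to be the hardest part of the problem.
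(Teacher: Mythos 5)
The statement you were asked to prove is Conjecture \ref{c1.2} of the paper: it is posed as an open problem and the paper contains no proof of it, so there is nothing of the authors' to compare your argument against. Your proposal, as you yourself concede, is not a proof either; the gap is total once the (correct) preliminary reduction is made. What you do establish is worth recording. Since every singular point, proper or infinitely near, must have multiplicity exactly $3$, a multiplicity sequence here is determined by the pair $(d,r)$, and the genus formula with all components rational gives $\binom{d-1}{2}=3r-s+1$ with $1\le s\le d$ (your sign on the $(s-1)$ term is the correct one: the paper's displayed identity \eqref{genusEquation2} has a sign slip, as the case of two lines already shows, though its source \eqref{genusEquation} is fine). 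Hence $r$ is pinned between $\binom{d-1}{2}/3$ and $\binom{d}{2}/3$, and the conjecture is equivalent to the assertion that $d$ is bounded. That is essentially the content of the paper's Conjecture \ref{c1.4} (minus the explicit bound $9$), so your reduction shows that Conjecture \ref{c1.4} implies Conjecture \ref{c1.2}; this is a useful observation, but it only relocates the difficulty.

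The actual gap is the degree bound, and your own computation shows the proposed tool cannot supply it: the logarithmic Miyaoka--Yau inequality for the pair $(\tX,\widetilde{C}+\sum_iE_i)$ collapses, after substituting the genus identity, to $22s\le d^2+15d$, which holds for all $d\geq 7$ and therefore says nothing. You would need a strictly sharper orbifold or branched-cover inequality, and you offer no candidate, no reason to expect one to exist, and no verification of the hypotheses (e.g.\ nefness or log-canonicity of $K_{\tX}+D$) under which even the inequality you invoke is legitimate. Moreover the conjecture is stated over an arbitrary algebraically closed field, and inequalities of Bogomolov--Miyaoka--Yau type are unavailable or false in positive characteristic, which is precisely the setting the paper is most concerned with, since bounded negativity itself fails there by \cite{CV}. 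So your text should be read as a correct reformulation of the conjecture together with an honest analysis of why the obvious attack fails, not as a proof; to turn it into one you would have to produce the missing characteristic-free mechanism that forces $d\le 9$ (or any bound at all), and at present none is known.
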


More generally, we call $(g_1,\ldots,g_s)$ a {\it genus sequence}
if there is a reduced plane curve with components $C_1,\ldots,C_s$
where $g_j$ is the genus of the normalization $C_j'$ of $C_j$.
We say $C$ has genus bound $g$ if $g_j\leq g$ for all $j$.

\begin{conj}
\label{c1.3a}
Let $K$ be a fixed algebraically closed ground field.
For every choice of integers $n\geq0$, $m>0$ and $g\geq0$, there are only finitely many
multiplicity sequences arising for irreducible plane curves over $K$ with multiplicity bound $m$,
genus bound $g$ and having at most $n$ singular points of multiplicity less than $m$.
\end{conj}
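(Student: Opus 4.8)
The plan is to reduce the finiteness of multiplicity sequences to a single numerical task: bounding the degree $d$ in terms of $m$, $n$, $g$ and the characteristic of $K$. A multiplicity sequence is the tuple $(d,m_1,\dots,m_r)$, and the hypotheses already force $m_i\le m$ for every $i$, so once $d$ is bounded it suffices to bound $r$. The genus formula for an irreducible plane curve,
\[
g=\binom{d-1}{2}-\sum_{i=1}^{r}\binom{m_i}{2},
\]
where the sum runs over all singular points, including infinitely near ones, then does the rest: with $d$ bounded the quantity $\binom{d-1}{2}-g$ is bounded, each singular point contributes at least $\binom{2}{2}=1$, and at most $n$ of them have multiplicity $<m$, so $r$ is bounded as well. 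Thus only finitely many tuples can occur, and the entire content of the conjecture is an estimate $d\le D(K,m,n,g)$.

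First I would isolate the \emph{heavy} points, those of multiplicity exactly $m$, from the at most $n$ \emph{light} points of multiplicity $<m$. Writing $r'$ for the number of heavy points, the genus formula gives
\[
r'\binom{m}{2}\le \binom{d-1}{2}-g\le r'\binom{m}{2}+n\binom{m-1}{2},
\]
so $r'$ grows quadratically, $r'\sim d^2/(m(m-1))$. The difficulty is that this is the only universally available relation, and it is perfectly consistent with arbitrarily large $d$: for $m=2$ the rational nodal curves of every degree realize it, which is exactly the phenomenon recorded in Example~\ref{xxx} and the reason the interesting range is $m\ge 3$. Bounding $d$ therefore requires a second, independent inequality that caps the number of heavy points more severely than the genus formula does.

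The most promising source of such an inequality, at least over $\C$, is a logarithmic Bogomolov--Miyaoka--Yau / Hirzebruch-type bound for the orbifold pair $(\PP^2,C)$: for a curve whose singularities are concentrated in points of a single multiplicity $m$, these inequalities constrain $\sum_i\binom{m_i}{2}$ against $d^2$ with a constant that should contradict the quadratic growth of $r'$ once $d$ is large. I would supplement this with B\'ezout constraints among the heavy points -- lines and conics through pairs and quintuples of proper heavy points force $d\ge 2m$, $d\ge 5m/2$, and so on -- which pin down the mutual position of the heavy points and, fed back into the orbifold inequality, should yield an effective $D(K,m,n,g)$. The genus bound $g$ and the cap $n$ enter only as bounded error terms, so the estimate would be uniform in the stated parameters.

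The hard part is twofold, and it is exactly why the statement is posed as a conjecture. First, the conclusion must be \emph{characteristic free}, whereas every Bogomolov--Miyaoka--Yau or Hirzebruch inequality rests on Hodge theory or Yau's theorem and genuinely fails in characteristic $p$; this is the same failure that breaks Bounded Negativity for rational surfaces, so over $K$ of positive characteristic a completely different mechanism is needed to cap $d$. Second, even over $\C$ the real obstacle is an \emph{existence} question: ruling out high-degree irreducible curves almost all of whose singularities have multiplicity exactly $m$ is a non-emptiness problem for Severi-type varieties attached to non-nodal singularity schemes, compounded by the bookkeeping forced by infinitely near points, and no known technique delivers the required uniform degree bound there. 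I expect this existence/non-existence dichotomy, rather than any of the preparatory steps, to be where the argument stands or falls.
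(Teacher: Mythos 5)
This statement is one of the paper's \emph{conjectures}; the paper offers no proof of it, and neither do you --- your text is a reduction plus an honest identification of the missing ingredient, which is the right way to treat it. The reduction is correct and is essentially the one implicit in the paper: by the genus formula $\binom{d-1}{2}-\sum_i\binom{m_i}{2}=g_1\ge 0$ for an irreducible curve (Equation (2.2) of the paper with $s=1$), a bound on $d$ bounds $r$ (each singular point contributes at least $1$ to the sum), and hence bounds the number of tuples; so the whole conjecture is a degree bound. (Minor slip: your left-hand inequality should read $r'\binom{m}{2}\le\binom{d-1}{2}-g_1\le\binom{d-1}{2}$ rather than $\le\binom{d-1}{2}-g$, since the actual genus $g_1$ may be smaller than the bound $g$; this does not affect the asymptotics $r'\sim d^2/(m(m-1))$.) The closest thing the paper has to a proof is Theorem \ref{rkLink}, which derives the conjecture for $m\ge\mu$ over $\C$ from the \emph{hypothesis} $\inf H(C)>-\mu$; its proof is exactly your first two paragraphs, run in the contrapositive: unboundedly many multiplicity-$m$ points force $r_i/d_i\to\infty$ and hence $\inf H(C_i)\le -m$. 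So your hoped-for ``second inequality'' (an orbifold Bogomolov--Miyaoka--Yau or Hirzebruch-type bound) is precisely a lower bound on $H$-constants, i.e.\ the same unproven input the paper isolates, and your diagnosis of why it cannot be characteristic-free in its usual form (it fails in characteristic $p$, where \cite{CV} gives irreducible curves with $H(C)$ arbitrarily negative) matches the paper's discussion.

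One point you should make explicit rather than leave as an aside: your own observation about nodal rational curves shows that the conjecture \emph{as literally stated} is false for $m=2$ and $g=0$, since the curves $C_d$ of Example \ref{xxx} realize infinitely many multiplicity sequences $(d,2,\dots,2)$ with multiplicity bound $2$, genus bound $0$, and no singular points of multiplicity less than $2$. The statement must therefore be read with $m\ge 3$ (consistently with Theorem \ref{rkLink}, whose hypothesis $\inf H(C)>-2$ is unsatisfiable precisely because $H(C_d)\to-2$). Apart from flagging that edge case, your proposal is a faithful account of what is known and of where the genuine difficulty lies.
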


Here is a stronger form of this conjecture:

\begin{conj}
\label{c1.3}
Let $K$ be a fixed algebraically closed ground field.
For every choice of integers $n\geq0$, $m>0$ and $g\geq0$, there are only finitely many
multiplicity sequences arising for plane curves over $K$ with multiplicity bound $m$,
genus bound $g$ and having at most $n$ points of multiplicity 2.
\end{conj}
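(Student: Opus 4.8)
The plan is to reduce the assertion to a single uniform bound on the degree, and then to isolate the one inequality that elementary methods cannot supply. First I would record the genus formula for a reduced plane curve $C=C_1\cup\cdots\cup C_s$ of degree $d$, with components $C_j$ of geometric genus $g_j$ and singular points (proper and infinitely near) $p_1,\dots,p_r$ of multiplicities $m_1,\dots,m_r$. Since the total delta invariant of $C$ equals $\sum_i\binom{m_i}{2}$ when all infinitely near points are listed, one has
\[
\sum_{j=1}^s g_j=\binom{d-1}{2}+(s-1)-\sum_{i=1}^r\binom{m_i}{2}.
\]
Because each $\binom{m_i}{2}\ge 1$ and $s\le d$, this gives $\sum_i\binom{m_i}{2}\le\binom{d-1}{2}+(d-1)=\binom{d}{2}$, hence $r\le\binom{d}{2}$. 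Thus once $d$ is bounded in terms of $n,m,g$, the number of points $r$ is bounded, every multiplicity lies in $\{2,\dots,m\}$, and only finitely many multiplicity sequences $(d,m_1,\dots,m_r)$ can occur. So the whole problem is the \emph{a priori} bound $d\le D(n,m,g)$.

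Next I would separate the double points from the rest. Writing $n_2\le n$ for the number of points of multiplicity $2$ and $n_{\ge 3}$ for the number of multiplicity at least $3$, and using $g_j\le g$ together with $s\le d$, the genus formula rearranges to
\[
\binom{m}{2}\,n_{\ge 3}\ \ge\ \sum_{m_i\ge 3}\binom{m_i}{2}\ \ge\ \binom{d-1}{2}+(s-1)-sg-n.
\]
This shows that a high-degree curve of the prescribed type is forced to carry on the order of $d^2$ points of multiplicity at least $3$. Bounding $d$ is therefore equivalent to an \emph{upper} bound on $n_{\ge 3}$ (or on $\sum_{m_i\ge 3}\binom{m_i}{2}$) that grows strictly more slowly than $\tfrac12 d^2$: any such bound contradicts the displayed inequality once $d$ is large.

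The case $m=2$ can be finished with exactly this input and nothing more: then $n_{\ge 3}=0$, so the inequality reads $\binom{d-1}{2}+(s-1)-sg\le n$, a quadratic inequality in $d$ that bounds $d$ by a function of $n$ and $g$ alone (using $1\le s\le d$). This confirms that the hypothesis limiting the number of double points is essential, shows the method is complete in the absence of triple-or-higher points, and localizes the entire difficulty in the points of multiplicity at least $3$.

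The hard part, and the reason the statement is posed as a conjecture, is supplying the missing upper bound on $n_{\ge 3}$. Over a field of characteristic $0$ I would blow up $\PP^2$ at the singular points and apply a logarithmic Bogomolov--Miyaoka--Yau or Hirzebruch-type inequality to the strict transform, bounding a weighted count of the multiplicities by a constant multiple of $d^2$; with a sharp enough constant this contradicts the lower bound above for large $d$. Equivalently, such a bound is precisely a uniform bounded-negativity (or $H$-constant) estimate for the blown-up surface. The obstacle is that the conjecture is characteristic free: the orbifold BMY inequality is unavailable in positive characteristic --- indeed it is exactly there that bounded negativity is known to fail for rational surfaces \cite{CV} --- so the characteristic-$0$ argument cannot simply be transported. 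A characteristic-independent upper bound on the number of multiplicity-$\ge 3$ points, even in the first open case $m=3$, $g=0$, $n=0$ of Conjecture \ref{c1.2}, is the genuine content that remains to be found.
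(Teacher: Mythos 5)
The statement you are proving is Conjecture \ref{c1.3}, which the paper poses as an open problem; there is no proof in the paper to compare against, and your write-up, as you yourself say, is not a proof either. What you do establish is correct and worthwhile: the genus formula in the form $\sum_j g_j=\binom{d-1}{2}+(s-1)-\sum_i\binom{m_i}{2}$ (this is the content of the paper's Proposition \ref{genusFormula}; note your sign on $(s-1)$ is the right one), the consequent bound $r\le\binom{d}{2}$ reducing everything to a degree bound $d\le D(n,m,g)$, the complete treatment of the case $m=2$, and the localization of the entire difficulty in an upper bound on the number $n_{\ge 3}$ of points of multiplicity at least $3$. This parallels the paper's own analysis, in particular the proof of Theorem \ref{rkLink}.

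The genuine gap is the final step, and your proposed route to it would not close the gap even in characteristic $0$. The genus formula already forces $\sum_i\binom{m_i}{2}$ to equal $\binom{d-1}{2}+(s-1)-\sum_j g_j\approx\tfrac12 d^2$, so no inequality of the shape ``a weighted count of multiplicities is at most a constant times $d^2$'' can produce a contradiction: what is needed is an upper bound on $n_{\ge 3}$ itself with constant strictly below $1/\binom{m}{2}$ times $\tfrac12 d^2$, equivalently a uniform lower bound on $H$-constants. That is exactly the content of the paper's Theorem \ref{rkLink}, which shows that $\inf H(C)>-\mu$ over $\C$ would yield Conjecture \ref{c1.3a} (the irreducible version only) for $m\ge\mu$; but such a bound is itself an open bounded-negativity-type statement (Question \ref{q1}), is false in positive characteristic by \cite{CV}, and in any case does not address the reducible Conjecture \ref{c1.3} you are targeting. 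Hao's theorem \cite{Hao} does not apply either, since the blown-up surface varies with the curve. So the conjecture remains open, and your sketch should be presented as a reduction plus a statement of the missing ingredient rather than as a conditional proof whose hypothesis is within reach of known BMY or Hirzebruch inequalities.
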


As a more accessible target for a counterexample, we also propose:

\begin{conj}
\label{c1.4}
There is no reduced plane curve in any characteristic of degree more than 9 whose components are all rational
and whose singularities all have multiplicity 3.
\end{conj}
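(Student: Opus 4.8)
The plan is to turn the hypotheses into a single numerical identity on a resolution, try to close off the large degrees by negativity bounds, and then confront the genuinely hard middle range.

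First I would record the bookkeeping. Suppose $C\subset\PP^2$ is reduced of degree $d>9$ with rational components $C_1,\dots,C_s$, every singular point (proper or infinitely near) of multiplicity $3$. Each such point contributes exactly $\binom{3}{2}=3$ to the total $\delta$-invariant, so summing the multiplicity sequences gives $\sum_p\delta_p=3r$, where $r$ is the number of singular points counted with their infinitely near points. On the other hand, comparing $C$ with the disjoint union of the normalizations of its components and using that all $C_i$ are rational yields $\sum_p\delta_p=\binom{d-1}{2}+s-1$. Hence $3r=\binom{d-1}{2}+s-1$. Blowing up all $r$ points produces a smooth rational surface $X\to\PP^2$ on which the strict transform $\widetilde C$ has smooth rational components and
\[
\widetilde C^{\,2}=d^2-9r=-\tfrac12\,d(d-9)-3s .
\]
The role of the bound $9$ is now transparent: it is the positive root of $d(d-9)$, so for $d>9$ the curve $\widetilde C$ is forced to have strongly negative self-intersection of order $-d^2/2$, distributed over $r\sim d^2/6$ centers, and its $H$-constant $H(C)=d^2/r-9$ tends to $-3$.

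Next I would try to rule out large $d$ by negativity. Passing to $X$, adjunction gives $\widetilde C_i^{\,2}=-2-K_X\cdot\widetilde C_i$ for each component, and one can feed the configuration into $H$-constant estimates or into an expected-dimension count for plane curves with $r$ assigned triple points. Both of these force a contradiction, but only once $d$ is fairly large (around $d\ge 22$, where $H(C)<-2$). The difficulty is twofold: the $H$-constant bound $H(C)>-2$ established above requires at most $9$ singular points, whereas our curves have $r\sim d^2/6$ of them; and an extension valid for arbitrarily many points would amount to a bounded-negativity statement, which is exactly what fails in positive characteristic. So this route, even where it bites, is not characteristic free.

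The crux is therefore the range $10\le d\le 21$, where all numerical constraints are met and one must show directly that the associated configurations of $r$ triple points are simply not realized by curves with rational components. The sharpest tools for this---logarithmic Bogomolov--Miyaoka--Yau and Hirzebruch-type inequalities for the point configuration---are available only in characteristic $0$ and in fact fail in characteristic $p$; this failure is precisely the source of the counterexamples to Bounded Negativity on rational surfaces cited above. A characteristic-free obstruction must thus be combinatorial or birational. The most promising such tool is the quadratic Cremona transformation based at three of the triple points, which sends $C$ to a curve of degree $2d-9$. Since $2d-9>d$ exactly when $d>9$, such a transformation raises the degree throughout the forbidden range, so one cannot simply descend to a base case; instead one would use a carefully chosen sequence of transformations (or transformations centered at a triple point together with smooth points) to force the appearance of a singular point of multiplicity different from $3$, contradicting the hypothesis. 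I expect the main obstacle to be exactly this control: bounding the multiplicities and the infinitely near points created by the transformations tightly enough to guarantee that a forbidden singularity must occur. That is where the real difficulty of the conjecture resides.
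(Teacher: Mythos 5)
The statement you were asked to prove is Conjecture \ref{c1.4}: the paper poses it as an open problem and offers no proof, so there is nothing of the authors' to compare your argument against. Your bookkeeping is correct and consistent with what the paper does establish: if every singular point (including infinitely near ones) of a reduced degree-$d$ curve $C$ with $s$ rational components has multiplicity $3$, then the genus formula (Proposition \ref{genusFormula}, cf.\ Example \ref{zzz}) gives $3r=\binom{d-1}{2}+s-1$, hence $\widetilde C^{\,2}=d^2-9r=-\tfrac12 d(d-9)-3s$ and $H(C)=d^2/r-9\to-3$. This correctly explains why $9$ is the threshold in the conjecture and why the largest known examples, $(9,3_{10})$ and $(9,3_{12})$, sit exactly at $d=9$.

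Beyond that, however, the proposal contains no step that actually derives a contradiction for any single $d>9$, and you acknowledge as much at the end. Two concrete problems with the claimed partial progress. First, the assertion that $H$-constant estimates ``force a contradiction'' once $d\gtrsim 22$ (where $H(C)<-2$) is not right even conditionally: the only unconditional lower bound available is $H(C)>-\overline{m}=-3$ from Proposition \ref{genusFormula}, which is precisely the asymptotic value of your curves and is never violated; $H(C)<-2$ contradicts only Conjecture \ref{c2}, which is itself open and concerns irreducible curves, while for reducible $C$ the conjectured bound is $-4$ (Conjecture \ref{c2red}) and is never approached here. The expected-dimension argument via \cite{Dumi} likewise excludes only \emph{generic} triple points, not the special configurations a counterexample would use. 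Second, for the range $10\le d\le 21$ the Cremona strategy is only a hope: a quadratic transform centered at three triple points raises the degree to $2d-9>d$, and you give no mechanism forcing a singular point of multiplicity $\ne 3$ to appear after any sequence of such transformations. So the proposal is a sound account of why the conjecture is plausible and where its difficulty lies, but it proves no case of it; as a proof it is entirely a gap.
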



These conjectures generalize a long standing problem from combinatorics.
Only two complex line arrangements are known whose multiple points all have
multiplicity 3, namely 3 concurrent lines (which has multiplicity sequence $(3,3)$)
and the curve defined by $(x^3-y^3)(x^3-z^3)(y^3-z^3)=0$
(which has multiplicity sequence $(9,3_{12})$, which we use
as shorthand for the sequence with 12 consecutive 3's).
In positive characteristics, we get two more examples.
The union of the 7 lines of the Fano plane gives
an example in characteristic 2 (it has multiplicity sequence $(7,3_7)$). 
For a field $K$ of characteristic 3, one can take the
9 $k$-lines of $\PP^2_k$ that do not contain the point $(0,0,1)$,
where $k\subset K$ is the subfield of order 3; it has multiplicity sequence $(9,3_{12})$.
It seems not to be known if there are any others in any characteristic.

By allowing curves of higher degrees we obtain a few more
multiplicity sequences with genus bound $g=0$, multiplicity bound $m=3$ and 
having $n=0$ points of multiplicity 2, namely
$(4,3)$, $(5,3_3)$, $(6,3_4)$, $(8,3_7)$, $(8,3_8)$ and $(9,3_{10})$.
We discuss these examples in more detail in the last section of this paper.
We are unaware of any others. Of the examples we are aware of, only $(4,3)$
and $(8,3_7)$ come from irreducible curves.

\subsection{$H$-constants and connections to Bounded Negativity}
Before discussing our results, we first describe how the multiplicity
sequence conjectures relate to the Bounded Negativity Conjecture.
The connection (see Theorem \ref{rkLink}) is via the concept of 
$H$-constants, which was introduced to explore the Bounded Negativity Conjectures
(see for example \cite{BDHLPS, P, Szp1}). Given a reduced singular curve $C\subset \PP^2$, let
$S = \{p_1, ...,  p_r\}$ be the set of singular points of $C$ (including infinitely near singular points;
see \cite{PR}), 
let $m_i$ be the multiplicity of $p_i$ and 
let $d$ be the degree of $C$. We call 
$p_i$ an {\it ordinary singular point} if it is an actual point of $\PP^2$
and has $m_i$ smooth branches, with distinct tangents. 
(Thus no point infinitely near an ordinary singular point can be singular.)
We now define
 
\begin{equation}
\label{eq1}
H(C)=\frac{d^2-\sum_{i=1}^rm_i^2}{r}=\frac{(C')^2}{r},
\end{equation}
where $C'$ is the proper transform of $C$ under the blowing up of the points $p_i$.
Note that $C'$ is a disjoint union of smooth curves, $C_1\ldots,C_s$.

It is easy to see that if $\inf H(C)>-\infty$, where the infimum is taken over either all reduced 
or over all irreducible, reduced
singular curves $C$, then the Bounded Negativity Conjecture is true.
Since by \cite{CV} the Bounded Negativity Conjecture is false
in every positive characteristic, we have $\inf H(C)=-\infty$ in every positive characteristic.
Indeed, over any algebraically closed field $K$
of positive characteristic $p>0$, it is easy to give curves $C$ whose singularities are all ordinary
but where $H(C)$ is arbitrarily negative (see Example \ref{yyy}).
By \cite{CV}, examples in every positive characteristic can even be given with $H(C)$ arbitrarily negative where
$C$ is reduced and irreducible. 

\begin{rk}
Some authors define $H(C)$ by restricting the sum $\sum_{i=1}^rm_i^2$ to multiplicities of actual points of $\PP^2$,
hence ignoring infinitely near singular points.
Since what is of concern with $H$-constants is how negative they can be,
ignoring some of the singular points of a curve $C$ would be of interest if it could make
$H(C)$ more negative, but currently we are not aware of any such examples.
Since each term in the sum $\sum_{i=1}^rm_i^2$ is at least 4, one cannot reduce $H(C)$
by ignoring singularities unless $H(C)<-4$.
Because $H$-constants can be arbitrarily negative in positive characteristics,
this might be a good venue to look for such examples.
No complex curves $C$ are yet known, however, with $H(C)\leq -4$.
Likewise, versions $\sigma^0_k$ of the quantities $\sigma_k$ defined below could
be defined ignoring infinitely near points, and we would for $k\leq 4$ have 
$\sigma^0_k(C)\geq\sigma_k(C)$. For $k>4$ it is a priori possible to
have $\sigma^0_k(C)<\sigma_k(C)$; it would be interesting to know if such an example
exists. 
\end{rk}

The following questions occur in \cite{H}; see Question 2.21 and Question 2.25.

\begin{question}
\label{q1}
Does there exist an irreducible reduced singular curve $C \subset \PP^2$ such that $$H(C)\leq -2 \  \ ?$$
\end{question}

Question \ref{q1red} is open in characteristic 0, where the most negative example known 
comes from the curves $C_d$ in Example \ref{xxx}, where we have $H(C_d)>-2$ for all $d$
but $\lim_{d\to\infty}H(C_d)=-2$. The curves $C_d$ are rational, of degree $d$ and have only double point
singularities. By Example \ref{zzz},
reduced, irreducible rational curves $C$ of degree $d$ with only triple points would have 
would have $H(C)$ approach $-3$ from above as $d$ increases.
So a possible approach to Question \ref{q1} could involve finding 
rational curves of high degree with only triple points, if such exist.

Reducible complex curves are known with $H$-constant close to $-4$, but no such curve $C$ is 
known with $H(C)\leq -4$ (see \cite{BHRS,R,RU}). 
This motivates the next question.

\begin{question}
\label{q1red}
Does there exist a reduced singular complex curve $C \subset \PP^2$ such that $$H(C)\leq -4 \  \  ?$$
\end{question}

We now state our first result. It establishes a connection between our multiplicity sequence conjectures 
and lower bounds on $H$-constants which themselves relate to the Bounded Negativity Conjecture.
Since $H$-constants are not bounded below in positive characteristics,
such a theorem is mainly of interest over the complex numbers even though the 
multiplicity sequence conjectures are of interest in every characteristic.

\begin{thm}\label{rkLink}
Assume over the complex numbers that we have $\inf H(C)>-\mu$ for some integer $\mu>1$.
Then, given any $n$ and $g$, Conjecture \ref{c1.3a} holds for each $m\geq\mu$.
\end{thm}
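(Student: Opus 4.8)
The plan is to show that the three hypotheses of Conjecture~\ref{c1.3a}, combined with the uniform lower bound on $H$, force the degree $d$ of the curve to be bounded; once $d$ is bounded, the genus formula bounds the number $r$ of singular points, every multiplicity is at most $m$, and hence only finitely many multiplicity sequences can occur. So the whole proof reduces to bounding $d$.

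First I would record the two numerical relations available for an irreducible curve $C$ of degree $d$ whose normalization has genus $g_0\le g$. Writing $m_1,\dots,m_r$ for the multiplicities at all singular points (including infinitely near ones, as in the paper's convention), the genus formula reads
\begin{equation*}
\sum_{i=1}^r m_i(m_i-1)=(d-1)(d-2)-2g_0 ,
\end{equation*}
while the definition \eqref{eq1} gives $d^2-\sum_i m_i^2=H(C)\,r$. Since the infimum of $H$ over all relevant (irreducible, reduced, singular) curves is assumed to be some $L>-\mu$, I would set $\delta:=L+\mu>0$, so that $H(C)\ge -\mu+\delta$ holds \emph{uniformly}, giving
\begin{equation*}
\sum_{i=1}^r m_i^2\le d^2+(\mu-\delta)\,r .
\end{equation*}

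Next I would subtract the genus identity from this inequality: the quadratic terms in the $m_i$ cancel, leaving $\sum_i m_i\le(\mu-\delta)\,r+3d-2+2g_0$. Now the hypothesis on small points enters. At most $n$ of the points have multiplicity $<m$, so at least $r-n$ of them have $m_i=m$, whence $\sum_i m_i\ge m(r-n)$. Comparing the two bounds yields
\begin{equation*}
(m-\mu+\delta)\,r\le 3d-2+2g_0+mn .
\end{equation*}
This is where the hypothesis $m\ge\mu$ is used: the coefficient satisfies $m-\mu+\delta\ge\delta>0$, so $r$ is bounded above by a \emph{linear} function of $d$, namely $r\le(3d+2g+mn)/\delta$. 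I would then play this against the quadratic lower bound coming again from the genus formula: since each $m_i\le m$, we have $\sum_i m_i(m_i-1)\le m(m-1)r$, so $r\ge\bigl((d-1)(d-2)-2g\bigr)/(m(m-1))$. Combining, the left side grows like $d^2/(m(m-1))$ and the right side like $3d/\delta$, so the inequality fails for all large $d$; hence $d$ is bounded by an explicit constant depending only on $m,n,g,\delta$, and the finiteness conclusion follows.

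The one delicate point, and the crux of the argument, is the boundary case $m=\mu$. There the coefficient $m-\mu+\delta$ collapses to $\delta$, and if one only knew $H(C)\ge-\mu$ (i.e.\ $\delta=0$) rather than $\inf H(C)>-\mu$, the displayed inequality would degenerate and instead bound $d$ from \emph{below}, so the argument would break down. Thus the strictness of the hypothesis $\inf H(C)>-\mu$ is precisely what provides the positive slack $\delta$ needed to handle $m=\mu$, and verifying that this slack is genuinely uniform over the whole class of curves (rather than tending to $0$ along some family) is the step I would treat most carefully.
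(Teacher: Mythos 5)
Your argument is correct and is essentially the paper's proof run in the forward direction: the paper argues by contraposition, extracting from a hypothetical infinite family of multiplicity sequences a sequence of curves $C_i$ whose number of singular points grows quadratically in the degree (via the genus formula) and hence has $H(C_i)\to -m\le-\mu$, while you use the same two ingredients --- the genus-formula rewriting of $H(C)$ as $\bigl(3d-2+2g_0-\sum_i m_i\bigr)/r$ and the count of at least $r-n$ points of multiplicity exactly $m$ --- to bound $d$ directly and then conclude finiteness. Your remark about the strict inequality providing the uniform slack $\delta>0$ needed when $m=\mu$ is exactly right, though there is nothing left to verify there: $\delta=\inf H(C)+\mu$ is a fixed positive constant by hypothesis, so it cannot degenerate along a family.
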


\begin{rk}
A version of the original Bounded Negativity Conjecture in which one bounds the genus is known to be true.
In particular, given a smooth complex surface $X$ and integer $g\geq0$, there is a bound $b_{X,g}$ such that
$C^2\geq b_{X,g}$ for every reduced curve $C\subset X$, where the geometric genus of each component of 
$C$ is bounded from above by $g$ \cite{Hao}.
\end{rk}

\subsection{Main results}
As Questions \ref{q1} and \ref{q1red} and Theorem \ref{rkLink} show, it is of interest, over the complex numbers,
to study the existence of lower bounds for $H$-constants. 
Our approach to this problem is to identify conditions which 
constrain the negativity of $H$-constants,
and is therefore of interest in all characteristics. We will use the following notation:
\begin{equation}
\label{eq1.1}
\sigma_k(C):=d^2-\sum_{i=1}^rm_i^2+kr.
\end{equation}
With this notation, a negative answer to Question \eqref{q1} is equivalent to the following.
\begin{conj}
\label{c2}
 With the above notation, for any  irreducible reduced singular complex curve $C \subset \PP^2$ one has 
 $$\sigma_2(C)>0.$$
 \end{conj}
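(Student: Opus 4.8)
The plan is to convert the inequality $\sigma_2(C)>0$ into a purely numerical statement about the singular points and then attack that. Working over the complex numbers, the geometric genus of an irreducible reduced curve $C$ of degree $d$ is
\[
g=\frac{(d-1)(d-2)}{2}-\sum_{i=1}^r\frac{m_i(m_i-1)}{2},
\]
where the sum runs over all singular points including infinitely near ones; this is the correct genus formula precisely because the $m_i$ are read off from the successive blow-ups resolving $C$, as in \eqref{eq1}. Substituting $\sum_i m_i^2=\sum_i m_i(m_i-1)+\sum_i m_i=(d-1)(d-2)-2g+\sum_i m_i$ into \eqref{eq1.1} gives, after simplification,
\[
\sigma_2(C)=3d-2+2g-\sum_{i=1}^r(m_i-2).
\]
Thus $\sigma_2(C)>0$ is equivalent to the bound $\sum_{i=1}^r(m_i-2)<3d-2+2g$ on the \emph{total excess multiplicity} $e(C):=\sum_i(m_i-2)=\sum_{m_i\ge 3}(m_i-2)$. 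Since each double point contributes $0$ to $e(C)$ and $g\ge 0$, the statement is immediate when $C$ has only double points (there $\sigma_2(C)=3d-2+2g>0$), and it suffices to prove $e(C)\le 3d-3$ in the hardest case $g=0$.

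So everything reduces to producing an upper bound on the total excess multiplicity that is \emph{linear} in $d$. First I would record what the standard tools give. Nonnegativity of the genus yields $\sum_i m_i(m_i-1)\le (d-1)(d-2)$; comparing this termwise with $e(C)$ (the ratio $(m-2)/(m(m-1))$ is maximized at $m=3$) gives only $e(C)\le\tfrac16(d-1)(d-2)$, and the Pl\"ucker/class inequality for the dual curve yields an analogous estimate. Both are \emph{quadratic} in $d$ and therefore far too weak. To close the gap to a linear bound I would try, in order: (i) the orbifold Bogomolov--Miyaoka--Yau and Hirzebruch--Langer inequalities, which in characteristic $0$ constrain the local invariants of the singular points of a plane curve and are the natural source of sharp global bounds; (ii) the du~Plessis--Wall bounds on the total Tjurina number in terms of $d$ and the minimal degree $\mathrm{mdr}(f)$ of a Jacobian syzygy, which for curves of small $\mathrm{mdr}$ constrain the singularities much more tightly; and (iii) reduction of $C$ by standard quadratic Cremona transformations centered at its points of highest multiplicity, aiming to show that any putative counterexample can be transformed to one of bounded degree and then excluded by inspection.

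The main obstacle is precisely this quadratic-to-linear gap. The genus formula is consistent with a rational curve having on the order of $d^2/6$ triple points (indeed it forces exactly $(d-1)(d-2)/6$ of them for a rational curve with only triple singularities), whereas $\sigma_2(C)>0$ requires fewer than $3d-2$ once $g=0$; ruling out the extremal behavior $e(C)\sim d^2/6$ is exactly the phenomenon isolated by Conjecture \ref{c1.4}, so a proof of $\sigma_2(C)>0$ along these lines would have to subsume (the irreducible case of) that statement. Because of this I would not expect a uniform argument to succeed immediately, and would instead try to bootstrap from the partial result proved later in this paper, that $H(C)>-2$ holds for rational curves with at most nine singular points: there the blow-up of $\PP^2$ at the (at most nine) points still carries a reasonably positive anticanonical class, which presumably bounds the negativity of the smooth proper transform $C'$. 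Extending such anticanonical-positivity arguments past nine points, or replacing them by one of the inequalities in (i)--(iii), is where the genuine difficulty lies, and is why Conjecture \ref{c2} remains open.
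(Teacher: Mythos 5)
You have not proved the statement, and no complete proof exists to compare against: \ref{c2} is stated in the paper as an open \emph{conjecture}, equivalent to a negative answer to Question \ref{q1}, and the paper only proves partial results around it. To your credit, your reduction step is correct and is in fact exactly the paper's Theorem \ref{thm1}: substituting the genus formula into \eqref{eq1.1} gives $\sigma_2(C)=3d-E(\tilde C)-\sum_{i=1}^r(m_i-2)$ with $E(\tilde C)=2-2g$, i.e.\ your identity $\sigma_2(C)=3d-2+2g-\sum_i(m_i-2)$, and the rational case of this is Corollary \ref{cor0}. Your diagnosis of the quadratic-to-linear gap is also the one the paper itself makes (the genus formula only yields $e(C)\leq\tfrac{1}{6}(d-1)(d-2)$, which is how Corollary \ref{cor-1}(d) gets the conjecture for $d\leq 20$ and no further), and your ``bootstrap'' endpoint is the paper's Theorem \ref{thmNew}, which covers only rational curves with $r\leq 9$ singular points. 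But items (i)--(iii) of your plan are never executed, and you concede as much; so what you have is an accurate restatement of the problem, not a proof.

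One step in your write-up is moreover genuinely wrong, not just incomplete: the claim that ``it suffices to prove $e(C)\leq 3d-3$ in the hardest case $g=0$.'' The inequality you must prove is genus-dependent, and the paper's Example \ref{ex1}(b) exhibits irreducible complex curves for which the genus-free bound fails outright: for $C_{p,q}:(x^p+y^p)^q-(y^q+z^q)^p=0$ with $5\leq p<q$ coprime one has $e(C)\geq pq(p-2)>3d-2$, and similarly for the degree-$5m$ curves with $16$ points of multiplicity $m$ one has $e(C)=16(m-2)>15m-2$ for $m\geq 30$. For these curves $\sigma_2(C)>0$ holds only because the term $2g$ is large, so no uniform linear bound $e(C)\leq 3d-3$ can exist, and a proof restricted to $g=0$ does not logically yield the conjecture for all irreducible curves --- any viable strategy must track the genus term rather than discard it. This, together with the unexecuted steps (i)--(iii), is precisely why the statement remains open; indeed, as you note, even the rational triple-point case would subsume the irreducible case of Conjecture \ref{c1.4}.
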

 And a negative answer to Question \eqref{q1red} is equivalent to the following.
\begin{conj}
\label{c2red}
 With the above notation, for any  reduced singular complex curve $C \subset \PP^2$  one has 
 $$\sigma_4(C)>0.$$
 \end{conj}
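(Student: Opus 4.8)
The plan is to split the statement according to the maximal multiplicity $m:=\max_i m_i$, disposing of small $m$ by an elementary genus computation valid in every characteristic and isolating the large-$m$ case as the genuinely complex-analytic core. First, since $C$ is singular we have $r>0$, and combining \eqref{eq1} with \eqref{eq1.1} gives the identity $\sigma_4(C)=r\bigl(H(C)+4\bigr)$, so the assertion $\sigma_4(C)>0$ is equivalent to the lower bound $H(C)>-4$, that is, to a negative answer to Question \ref{q1red}. The whole difficulty is therefore to produce this lower bound, and the restriction to $\C$ is essential: by \cite{CV} the quantity $H(C)$ is unbounded below in positive characteristic, so whatever tool produces the bound must be one unavailable in characteristic $p$.

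For the elementary half I would use the classical relation between the multiplicities and the genus drop. Writing $\tilde C=C_1'\sqcup\cdots\sqcup C_s'$ for the normalization and $\delta$ for the total delta-invariant, the Enriques/Noether formula (which is precisely the reason the $p_i$ are taken to include all infinitely near points) gives the exact identity $\sum_i\binom{m_i}{2}=\delta=\binom{d-1}{2}-\sum_j g_j+s-1$. Substituting this into $\sum_i m_i^2=2\delta+\sum_i m_i$ and then into $(C')^2=d^2-\sum_i m_i^2$ yields the exact identity $(C')^2=3d+2\sum_j g_j-2s-\sum_i m_i$; since every component has degree at least $1$ we have $s\le d$, so $3d-2s\ge d>0$, and using $m_i\le m$ we obtain $H(C)=(C')^2/r>-m$. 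In particular $\sigma_4(C)=r\bigl(H(C)+4\bigr)>0$ holds in every characteristic whenever $m\le 4$, so any hypothetical counterexample over $\C$ must carry a singular point of multiplicity at least $5$.

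The remaining, and genuinely hard, case is when $C$ has a point of multiplicity $m\ge 5$; here the elementary bound $H(C)>-m$ is too weak, and the high-multiplicity (multiplicity comparable to the degree) examples of \cite{CV} show that no characteristic-free argument can suffice. For this case I would resolve the singularities by the iterated blowup $\pi\colon X\to\PP^2$, so that $C'$ is a disjoint union of smooth curves with $(C')^2=r\,H(C)$, and bring in a characteristic-$0$ positivity inequality of Bogomolov--Miyaoka--Yau type for the logarithmic pair $(X,C')$ (equivalently, for the open surface $\PP^2\setminus C$), in the refined orbifold form due to Langer. After expanding $(K_X+C')^2$ by adjunction on each $C_j$ and using the standard expression for the logarithmic Euler characteristic of $\PP^2\setminus C$ in terms of $d$, the $m_i$, the $g_j$ and $r$, one rearranges the inequality into a lower bound for $(C')^2$ proportional to $r$; the target is to make the emerging constant exactly $4$ rather than a weaker value.

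The hard part---and the reason the statement is posed as a conjecture rather than proved---is the positivity hypothesis that any BMY inequality requires: it applies only when $K_X+C'$ is nef, or at least when $(X,C')$ is of log general type, and these hypotheses degenerate precisely in the near-extremal regime of curves with many rational components and high-multiplicity points. A complete proof along these lines would therefore have to either classify the finitely many configurations in which log general type fails and check them directly, or else run an orbifold BMY inequality on a ramified cover of $\PP^2$ adapted to the multiplicities $m_i$ so as to bypass the nefness assumption, in both cases securing the sharp constant $4$. Controlling those boundary configurations and pinning the constant to $4$ rather than to a weaker bound is the principal obstacle.
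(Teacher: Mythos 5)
You were asked to prove Conjecture \ref{c2red}, and the first thing to say is that the paper does not prove it either: it is an open conjecture, equivalent (via the identity $\sigma_4(C)=r\,(H(C)+4)$, which you derive correctly) to a negative answer to Question \ref{q1red}. So no complete proof was possible, and you were right not to claim one. What can be checked is your elementary half, and it is correct and coincides exactly with what the paper does establish: your identity $(C')^2=3d+2\sum_j g_j-2s-\sum_i m_i$ is the paper's Equation \eqref{genusEquation}, and your conclusion that $\sigma_4(C)>0$ whenever every $m_i\le 4$ is Corollary \ref{cor-1}(c), derived there from Proposition \ref{genusFormula}; the paper in fact gets the marginally stronger statement $H(C)>-\overline{m}$ with $\overline{m}$ the \emph{average} multiplicity, so the conjecture holds already when $\overline{m}\le4$, not merely when $\max_i m_i\le4$. (A small point in your favor: your form of the $\delta$-identity, $\sum_i\binom{m_i}{2}=\binom{d-1}{2}-\sum_j g_j+(s-1)$, is the correct one --- the sign of $(s-1)$ in the paper's displayed Equation \eqref{genusEquation2} is reversed, as the example of two lines shows, though this is harmless there since that equation is only invoked with $s=1$.) Your remark that the restriction to $\C$ is essential because of the positive-characteristic examples of \cite{CV} also matches the paper's discussion.

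The genuine gap is the one you flagged yourself: the case of a singular point of multiplicity $m\ge5$, where the bound $H(C)>-\overline{m}$ is useless. Your proposed route --- a logarithmic or orbifold Bogomolov--Miyaoka--Yau inequality for the pair $(X,C')$ --- is indeed the strategy behind the known partial results, but it is known to fall short of the sharp constant: Hirzebruch-type inequalities give $H(C)>-4$ only for line arrangements \cite{BDHLPS}, configurations of components of degree at most $2$ with ordinary singularities yield only $H(C)\ge-4.5$ and only under additional hypotheses \cite{PT,PS}, and for general reduced complex curves the known examples approach $-4$ without reaching it \cite{R,BHRS,RU} while no theoretical bound excludes $H(C)\le-4$. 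Pinning the constant to exactly $4$, and controlling the boundary configurations where nefness or log general type fails (many rational components, high-multiplicity points), is precisely what nobody knows how to do; your sketch does not close this, and you said as much. In short: your elementary portion is a correct proof of the paper's partial result, and your assessment of the remaining case accurately locates why the statement is posed as a conjecture rather than proved.
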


In this note we prove some inequalities for the invariant $\sigma_k(C)$.
No irreducible complex plane curves $C$ are known for which $\sigma_k(C)<0$ when $k\geq2$.
In positive characteristics by \cite{CV} we now have for any $k$ irreducible curves $C$
with $\sigma_k(C)<0$, but it is still of interest to understand better
how these curves arise. 

Our first result is the following.
\begin{thm}
\label{thm1}
With the above notation, assume that $C$ is irreducible. Then one has
$$\sigma_k(C) = 3d-E(\tilde C) -\sum_{i=1}^r(m_i-k),$$
where $\tilde C$ is the normalization of the curve $C$ and $E(\tilde C)$ denotes the corresponding Euler characteristic of $\tilde C$.
Therefore Conjecture \ref{c2} holds exactly when
$$3d-E(\tilde C) >\sum_{i=1}^r(m_i-2).$$
\end{thm}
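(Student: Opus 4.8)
The plan is to read this off from the genus--degree formula for singular plane curves, after a bookkeeping reduction. Writing out $\sigma_k(C)=d^2-\sum_{i=1}^r m_i^2+kr$ and expanding the proposed right-hand side as
\[
3d-E(\tilde C)-\sum_{i=1}^r(m_i-k)=3d-E(\tilde C)-\sum_{i=1}^r m_i+kr,
\]
I see that the two copies of $kr$ cancel. Hence the asserted formula is equivalent to the $k$-independent identity
\[
E(\tilde C)=3d-d^2+\sum_{i=1}^r m_i(m_i-1),
\]
which I would prove first. Granting it, the final sentence of the theorem is just the case $k=2$: since $\sigma_2(C)=3d-E(\tilde C)-\sum_i(m_i-2)$, positivity of $\sigma_2(C)$ (that is, Conjecture \ref{c2}) holds precisely when $3d-E(\tilde C)>\sum_i(m_i-2)$.

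To prove the identity for $E(\tilde C)$, I would use that $\tilde C$ is the normalization of an irreducible curve, hence a smooth connected projective curve, so $E(\tilde C)=2-2g$ where $g$ is the geometric genus of $C$. The input I would invoke is the genus formula
\[
g=\binom{d-1}{2}-\sum_{i=1}^r\binom{m_i}{2},
\]
in which the sum ranges over all the singular points $p_1,\dots,p_r$, including the infinitely near ones, each contributing $\binom{m_i}{2}$ to the total delta invariant. Substituting $\binom{d-1}{2}=\tfrac12(d^2-3d+2)$ and $\binom{m_i}{2}=\tfrac12 m_i(m_i-1)$ into $E(\tilde C)=2-2g$ and collecting terms gives exactly $E(\tilde C)=3d-d^2+\sum_i m_i(m_i-1)$.

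The only substantive point is justifying the genus formula in the required generality --- with every infinitely near singular point included, and with no assumption that the singularities are ordinary or that the characteristic is zero. I would establish it characteristic-free by resolving the singularities through successive point blow-ups: if the strict transform of $C$ passes through a (possibly infinitely near) point with multiplicity $m$, then on the blow-up $\pi$ the new strict transform $C'$ satisfies $C'=\pi^*C-mE$, and a short adjunction computation gives $p_a(C')=p_a(C)-\binom{m}{2}$. Iterating until the strict transform is smooth turns the arithmetic genus $p_a(C)=\binom{d-1}{2}$ of the degree-$d$ plane curve into the geometric genus $g$, dropping by exactly $\sum_i\binom{m_i}{2}$. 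This is where care is needed, both to confirm that the blow-up formula is valid over an arbitrary algebraically closed field and to ensure that the $m_i$ appearing in the statement really are the multiplicities of the strict transforms at each stage of the resolution.
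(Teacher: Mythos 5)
Your proposal is correct and follows essentially the same route as the paper: the paper deduces Theorem \ref{thm1} immediately from its Proposition \ref{genusFormula}, whose proof is exactly the characteristic-free genus/adjunction computation you describe --- resolving $C$ by successive blow-ups of all (including infinitely near) singular points and applying adjunction to the smooth proper transform, yielding $d^2-\sum_i m_i^2=2g-2+3d-\sum_i m_i$ for irreducible $C$. Your phrasing via the iterated drop $p_a(C')=p_a(C)-\binom{m}{2}$ is just a step-by-step version of the paper's single adjunction computation on the final blow-up, so there is nothing substantive to add.
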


\begin{cor}
\label{cor-1}
Let $C$ be a reduced plane curve with $r$ singularities, let $\overline{m}$ be the average
of the multiplicities of the singular points and let $m$ be the maximum of the multiplicities.
\begin{enumerate}
\item[(a)] Then $\sigma_k(C)>r(k-\overline{m})$.
\item[(b)] Conjecture \ref{c2} holds when $m=2$.
\item[(c)] Conjecture \ref{c2red} holds when $\overline{m}\leq 4$ and hence when $m\leq4$.
\item[(d)] If $C$ is also irreducible, then Conjecture \ref{c2} holds when
\begin{equation}
\label{eq10}
3d-2 >\sum_{i=1}^r(m_i-2).
\end{equation}
This inequality holds for every irreducible curve of degree $d \leq 20$.
\end{enumerate}
\end{cor}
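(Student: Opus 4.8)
The plan is to reduce everything to one algebraic identity together with two elementary estimates. First I would record that $r\overline{m}=\sum_{i=1}^r m_i$, so that
$$\sigma_k(C)-r(k-\overline{m})=\Big(d^2-\sum_{i=1}^r m_i^2+kr\Big)-rk+\sum_{i=1}^r m_i=d^2-\sum_{i=1}^r m_i(m_i-1).$$
The right-hand side no longer involves $k$, which is the observation that makes (a) uniform in $k$. Next I would invoke the genus--degree formula in the form $\sum_{i=1}^r\binom{m_i}{2}=\binom{d-1}{2}+s-1-\sum_{j=1}^s g_j$, where $s$ is the number of irreducible components of $C$ and $g_j$ is the genus of the normalization $C_j'$; this is exactly the computation behind Theorem \ref{thm1}, carried out for a possibly reducible reduced curve, and it is where the bookkeeping of infinitely near points is used. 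Substituting and simplifying $(d-1)(d-2)=d^2-3d+2$ gives
$$\sigma_k(C)-r(k-\overline{m})=d^2-\sum_{i=1}^r m_i(m_i-1)=3d-2s+2\sum_{j=1}^s g_j=3d-E(\tilde C).$$

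For part (a), since each component has degree at least $1$ we have $s\le d$, and since $g_j\ge 0$ we have $E(\tilde C)=2s-2\sum_j g_j\le 2s\le 2d$; hence $3d-E(\tilde C)\ge 3d-2d=d>0$, which proves $\sigma_k(C)>r(k-\overline{m})$. Parts (b) and (c) are then immediate specializations. If $m=2$ then every singular point has multiplicity exactly $2$, so $\overline{m}=2$, and (a) with $k=2$ gives $\sigma_2(C)>r(2-\overline{m})=0$; if $\overline{m}\le 4$ then (a) with $k=4$ gives $\sigma_4(C)>r(4-\overline{m})\ge 0$, and the case $m\le 4$ follows from $\overline{m}\le m$. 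Note that (b) and (c) require no more than the hypotheses of Conjectures \ref{c2} and \ref{c2red} themselves.

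For part (d) I would argue directly from Theorem \ref{thm1}: for irreducible $C$ over $\C$ we have $E(\tilde C)=2-2g\le 2$, so $\sigma_2(C)=3d-E(\tilde C)-\sum_{i=1}^r(m_i-2)\ge 3d-2-\sum_{i=1}^r(m_i-2)$, and \eqref{eq10} forces $\sigma_2(C)>0$. It then remains to verify \eqref{eq10} for $d\le 20$. Here I would use the elementary inequality $m_i-2\le\tfrac13\binom{m_i}{2}$, valid for every integer $m_i\ge 2$ because it rearranges to $(m_i-3)(m_i-4)\ge 0$, together with the genus formula $\sum_i\binom{m_i}{2}=\binom{d-1}{2}-g\le\binom{d-1}{2}$, to obtain
$$\sum_{i=1}^r(m_i-2)\le\tfrac13\sum_{i=1}^r\binom{m_i}{2}\le\frac{(d-1)(d-2)}{6}.$$
Finally I would compare this with $3d-2$: the inequality $\tfrac16(d-1)(d-2)<3d-2$ is equivalent to $d^2-21d+14<0$, which for integers holds precisely when $1\le d\le 20$, giving \eqref{eq10} on that range.

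The step I expect to require the most care is the genus formula with infinitely near points built in, that is, the identity $\sum_i\binom{m_i}{2}=\binom{d-1}{2}+s-1-\sum_j g_j$ (equivalently, the reducible analogue of Theorem \ref{thm1}); once this is in hand, (a)--(c) become one-line specializations. For (d) the only genuinely new ingredient is the optimization $m_i-2\le\tfrac13\binom{m_i}{2}$, whose sharp cases $m_i\in\{3,4\}$ explain why triple and quadruple points are the extremal configurations and why the threshold lands exactly at $d=20$.
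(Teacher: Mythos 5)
Your proposal is correct and follows essentially the same route as the paper: part (a) rests on exactly the chain of inequalities ($g_j\ge 0$, $s\le d$, $d>0$) underlying Proposition \ref{genusFormula}, parts (b) and (c) are the same one-line specializations, and part (d) uses the identical estimates $\binom{m_i}{2}\ge 3(m_i-2)$ and $3d-2>\tfrac16(d-1)(d-2)$ for $d\le 20$. One small remark in your favor: your form of the genus formula, $\sum_i\binom{m_i}{2}=\binom{d-1}{2}+(s-1)-\sum_j g_j$, is the correct one, whereas the displayed Equation \eqref{genusEquation2} in the paper has the sign of $(s-1)$ reversed (as the example of two lines, $d=s=2$, $r=1$, $m_1=2$, shows); this typo does not affect the paper's use of that equation in the case $s=1$.
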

The fact that Conjecture \ref{c2} holds for irreducible curves of degree $d \leq 20$ was known, for instance it is stated towards the end of section 2  in \cite{BDHLPS}. See Example \ref{ex1} below for curves not satisfying the inequality \eqref{eq10}.

\begin{cor}
\label{cor1}
Assume $C$ is reduced and irreducible.
Conjecture \ref{c2} holds when all the singularities of the curve $C$ 
have multiplicity at most $3$ and all the singularities of multiplicity $3$ are situated on a curve 
of degree 8.  In particular, this holds when the number of singularities of $C$ of multiplicity $3$ is strictly smaller than $45$. 

\end{cor}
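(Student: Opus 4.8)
The plan is to reduce the whole statement to the numerical criterion already recorded in Corollary~\ref{cor-1}(d) and then to bound the number of triple points. Since every singular point has multiplicity $2$ or $3$, each double point contributes $0$ and each triple point contributes $1$ to the sum $\sum_{i=1}^r(m_i-2)$; writing $t$ for the number of singular points of multiplicity $3$, we get $\sum_{i=1}^r(m_i-2)=t$. By Corollary~\ref{cor-1}(d) it therefore suffices to establish
\[
3d-2>t .
\]
Thus the entire statement comes down to controlling $t$ in terms of the degree $d$.

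First I would dispose of the low degrees: for $d\le 8$ the inequality $3d-2>\sum_i(m_i-2)$ holds for every irreducible curve by Corollary~\ref{cor-1}(d) (which is valid through degree $20$), so there is nothing to prove. For $d>8$ I would exploit the hypothesis that the triple points lie on a curve $D$ of degree $8$. Since $C$ is irreducible of degree $d>8=\deg D$, it is not a component of $D$, so B\'ezout applies and $C\cdot D=8d$. At each triple point $p$ of $C$ the local intersection number satisfies $I_p(C,D)\ge \mult_p(C)\,\mult_p(D)\ge 3$, and summing these contributions gives $8d=C\cdot D\ge 3t$, i.e. $t\le 8d/3$. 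Because $8d/3<3d-2$ exactly when $d>6$, the desired inequality $3d-2>t$ follows for all $d>8$, which together with the low-degree case proves the first assertion.

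For the ``in particular'' clause I would produce the octic $D$ by a dimension count. Forms of degree $8$ in three variables form a vector space of dimension $\binom{10}{2}=45$, and requiring a form to vanish at a prescribed point is a single linear condition. Hence if the number $t$ of triple points is strictly smaller than $45$, the conditions of passing through them cannot exhaust the space, so a nonzero octic through all of them exists; this is precisely a degree-$8$ curve $D$ satisfying the hypothesis of the first part, and the claim follows.

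The main technical obstacle is the bookkeeping for infinitely near triple points, which enter through our convention on multiplicity sequences. In the B\'ezout step the naive bound $I_p(C,D)\ge\mult_p(C)\,\mult_p(D)$ must be upgraded to the Noether-type expansion $I_p(C,D)\ge\sum_{q\succeq p}\mult_q(C)\,\mult_q(D)$ over $p$ and its infinitely near points, so that each infinitely near triple point $q$ also contributes $3$; this requires the successive strict transforms of $D$ to pass through the whole cluster, i.e. $\mult_q(D)\ge 1$ there. For the interpolation the same cluster conditions must be imposed: for honest points this is the single-condition-per-point count that yields $t<45$, while deep infinitely near points force the additional proximity conditions through the intermediate centers, and one must check using the proximity inequalities satisfied by the multiplicity sequence of $C$ that a nonzero octic still survives. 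Handling this cluster geometry cleanly, rather than the elementary inequalities above, is where the real care is needed.
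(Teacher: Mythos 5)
Your argument is essentially the paper's: reduce to the criterion $3d-2>\sum_i(m_i-2)=n_3$ of Corollary \ref{cor-1}(d), bound $n_3$ by intersecting $C$ with the octic, and get the octic from the dimension count $\binom{10}{2}=45$. The one point you flag as the ``real obstacle'' --- bookkeeping for infinitely near triple points --- is exactly what the paper's proof is organized around, and it is dispatched more cheaply than you anticipate by working on the blow-up $X\to\PP^2$ of the whole cluster of singular points rather than in the plane. There one takes the class $8L-\sum_{i_j}E_{i_j}$ (sum over the triple points, including infinitely near ones, with $E_{i_j}$ the total transform of the exceptional divisor); Riemann--Roch on the rational surface $X$ gives $h^0\geq \tfrac12\bigl((C'')^2-K_X\cdot C''\bigr)+1=45-n_3>0$ directly, so no proximity-inequality analysis is needed for the interpolation step, and your Noether-type expansion of the local intersection numbers becomes the single computation $0\leq \tilde C\cdot C''=8d-3n_3$, valid because the irreducible $\tilde C$ has $\tilde C\cdot L=d>8$ and hence is not a component of $C''$. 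So your route is the right one; to make it complete you should replace the planar B\'ezout-plus-proximity discussion by this intersection computation on the blow-up, which turns the flagged difficulty into a two-line verification.
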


Our next result gives bounds on $\sigma_k$ for certain values of $k$.
The proof uses methods different from those used for our results above,
methods we believe to be of independent interest.

\begin{prop}
\label{prop0}
For a reduced singular complex plane curve $C$ whose singular point of maximum multiplicity has multiplicity $m$, one has
$$\sigma_{2m-1}(C)\geq 2d-1.$$
If $C$ is irreducible, then
$$\sigma_{m}(C)\geq 3d-2.$$
\end{prop}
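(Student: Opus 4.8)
The plan is to reduce both inequalities to purely combinatorial upper bounds on $\sum_{i=1}^r m_i^2$ in terms of $d$, and then to produce those bounds from a genus/Euler-characteristic computation. Write $m=\max_i m_i$. Using $d^2-(2d-1)=(d-1)^2$, the first assertion $\sigma_{2m-1}(C)\ge 2d-1$ is equivalent to $\sum_{i=1}^r\bigl(m_i^2-(2m-1)\bigr)\le (d-1)^2$; and since $m\ge m_i$ gives $m_i^2-(2m-1)\le m_i^2-(2m_i-1)=(m_i-1)^2$, it suffices to prove the clean inequality
$$\sum_{i=1}^r (m_i-1)^2\le (d-1)^2.\qquad (\ast)$$
Likewise, using $d^2-(3d-2)=(d-1)(d-2)$ together with $m_i^2-m\le m_i(m_i-1)$, the irreducible assertion $\sigma_m(C)\ge 3d-2$ reduces to $\sum_{i=1}^r m_i(m_i-1)\le (d-1)(d-2)$.

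I would dispatch the irreducible case immediately from Theorem~\ref{thm1}. There $\sigma_m(C)=3d-E(\tilde C)-\sum_i(m_i-m)$; since $\tilde C$ is smooth and connected we have $E(\tilde C)=2-2g\le 2$, while $m_i\le m$ forces $-\sum_i(m_i-m)\ge 0$, giving $\sigma_m(C)\ge 3d-2$. Equivalently $\sum_i m_i(m_i-1)=(d-1)(d-2)-2g\le (d-1)(d-2)$, which is just nonnegativity of the geometric genus.

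For the reduced case the entire content is $(\ast)$, and here I would run a genus computation valid for reducible curves. Adjunction on the blow-up of $\PP^2$ at the $p_i$ (equivalently, the formula of Theorem~\ref{thm1} applied to each component and summed, with $\tilde C=\bigsqcup_j\tilde C_j$ and $E(\tilde C)=\sum_j(2-2g_j)$) yields $\sum_i m_i(m_i-1)=d^2-3d+E(\tilde C)$. Substituting into $(\ast)$ and simplifying, $(\ast)$ becomes equivalent to $E(\tilde C)-\sum_i(m_i-1)\le d+1$. Writing $E(C)$ for the topological Euler characteristic of the curve $C$ itself and $b_i$ for the number of branches of $C$ at an actual point $p_i$, the normalization identifies the $b_i$ branches, so $E(C)=E(\tilde C)-\sum_{p_i\ \mathrm{actual}}(b_i-1)$; and since $m_i\ge b_i$ gives $\sum_{\text{all }i}(m_i-1)\ge\sum_{p_i\ \mathrm{actual}}(b_i-1)$, we get $E(\tilde C)-\sum_i(m_i-1)\le E(C)$. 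Thus it is enough to prove $E(C)\le d+1$.

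The bound $E(C)\le d+1$ is the step I expect to be the crux, and it is where an input beyond Theorem~\ref{thm1} is needed. I would argue that $C$ is connected, because any two components of a plane curve meet by B\'ezout; hence the normalization, which has $s$ connected components (one per irreducible component of $C$), can be reglued to the connected curve $C$ only if $\sum_{p_i\ \mathrm{actual}}(b_i-1)\ge s-1$. Combining this with $g_j\ge 0$ and with $s\le d$ (each component has degree at least $1$) gives $E(C)=2s-2\sum_j g_j-\sum_{p_i}(b_i-1)\le 2s-(s-1)=s+1\le d+1$, which finishes $(\ast)$ and hence the proposition. A more algebraic alternative to $(\ast)$, which may be the intended ``method of independent interest,'' is to intersect two generic first polars $P_1,P_2$ of $C$: each has degree $d-1$ and multiplicity $m_i-1$ at $p_i$, so B\'ezout gives $\sum_i(m_i-1)^2\le P_1\cdot P_2=(d-1)^2$ once one knows $P_1,P_2$ share no component. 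In either approach the main obstacle is the careful bookkeeping of infinitely near singular points: I must verify that the contribution of each full multiplicity sequence is correctly accounted for --- in the topological route, that passing to infinitely near points preserves $\sum_{\text{all }i}(m_i-1)\ge\sum_{\text{actual}}(b_i-1)$, and in the polar route, that the local intersection of $P_1,P_2$ dominates $(m_i-1)^2$ all along each tower of infinitely near points.
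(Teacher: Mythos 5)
Your argument is correct and is essentially the paper's own proof in different packaging: the reduction via $m_i^2-(2m-1)\le(m_i-1)^2$, the use of $b_i\le m_i$ (the paper's $r_i\le m_i$ for branch numbers), and the crux bound $E(C)\le 1+s\le 1+d$ from connectedness of $C$ and $s\le d$ are all exactly the steps the paper takes, with your $E(C)\le d+1$ being equivalent to the paper's key inequality $\mu(C)\le(d-1)^2$ via $\mu(C)=(d-1)(d-2)-2+E(C)$. The only cosmetic difference is that the paper routes the bookkeeping through Milnor numbers and $\delta$-invariants (following Melle-Hern\'andez--Wall, which also lets it work in positive characteristic via $\ell$-adic Euler characteristics), whereas you work directly with topological Euler characteristics of $C$ and $\tilde C$; your treatment of the irreducible case via Theorem \ref{thm1} likewise matches the paper's use of $2\delta(C)\le(d-1)(d-2)$.
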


Finally we discuss some consequences of Theorem \ref{thm1} above.
Corollary \ref{cor2} is immediate given the obvious fact that 
$E(\tilde C)=E(C)+\sum_i(m_i-1)$ when the singularities are all ordinary.
And Corollary \ref{cor0} is immediate given that $E(\tilde C)=2$ for a rational curve.

\begin{cor}
\label{cor2}
With the above notation, assume  that all the singularities of the irreducible curve $C$ are ordinary. Then 
 $$\sigma_2(C) = 3d-E(C) -\sum_{i=1}^r(2m_i-3).$$
Hence, in this situation, Conjecture \ref{c2} is equivalent to the inequality
$$3d -\sum_{i=1}^r(2m_i-3)>E(C).$$
\end{cor}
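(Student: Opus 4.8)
The plan is to derive the formula directly from Theorem \ref{thm1} specialized to $k=2$, the only additional input being a computation of the Euler characteristic of the normalization in terms of that of $C$ itself.

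First I would invoke Theorem \ref{thm1}. Since $C$ is irreducible, taking $k=2$ gives
$$\sigma_2(C) = 3d - E(\tilde C) - \sum_{i=1}^r(m_i - 2).$$
Thus the whole problem reduces to rewriting $E(\tilde C)$ in terms of $E(C)$. The key step is the claim that $E(\tilde C) = E(C) + \sum_{i=1}^r(m_i - 1)$ once every singular point is ordinary. I would establish this by additivity of the topological Euler characteristic under stratification. Write $C = C_{\mathrm{sm}} \sqcup \{p_1,\ldots,p_r\}$, where $C_{\mathrm{sm}}$ is the smooth locus and the $p_i$ are the singular points, which by the very definition of \emph{ordinary} are all actual points of $\PP^2$ (no infinitely near singularities occur). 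The normalization $\nu\colon \tilde C \to C$ restricts to an isomorphism over $C_{\mathrm{sm}}$, while over an ordinary point $p_i$ of multiplicity $m_i$ its fibre consists of exactly $m_i$ points, one for each of the $m_i$ smooth branches with distinct tangents. Additivity then yields
$$E(\tilde C) = E(C_{\mathrm{sm}}) + \sum_{i=1}^r m_i = E(C) - r + \sum_{i=1}^r m_i = E(C) + \sum_{i=1}^r(m_i - 1).$$

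Substituting this into the expression for $\sigma_2(C)$ and merging the two sums,
$$\sigma_2(C) = 3d - E(C) - \sum_{i=1}^r(m_i-1) - \sum_{i=1}^r(m_i-2) = 3d - E(C) - \sum_{i=1}^r(2m_i - 3),$$
which is the asserted identity. The equivalence with Conjecture \ref{c2} is then immediate, since $\sigma_2(C) > 0$ holds if and only if $3d - \sum_{i=1}^r(2m_i-3) > E(C)$.

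I expect the one point requiring any care to be the Euler-characteristic bookkeeping: one must use that ordinariness forces each $p_i$ to be an honest point of $\PP^2$, so that the sum over the $r$ singular points is not contaminated by infinitely near contributions, and that the fibre of $\nu$ over each $p_i$ has precisely $m_i$ points. Granting these, the result is a routine substitution, which is why the authors describe it as immediate.
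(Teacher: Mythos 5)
Your proof is correct and follows exactly the route the paper takes: Corollary \ref{cor2} is obtained from Theorem \ref{thm1} with $k=2$ together with the identity $E(\tilde C)=E(C)+\sum_i(m_i-1)$ for ordinary singularities, which the paper simply calls ``obvious'' and which you justify by additivity of the Euler characteristic over the stratification by the smooth locus and the $r$ singular points. Your bookkeeping and the final merging of the two sums into $\sum_i(2m_i-3)$ are all in order.
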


\begin{cor}
\label{cor0}
If $C$ is a singular rational curve with $r$ singular points of multiplicities $m_1,\ldots,m_r$, then 
Conjecture \ref{c2} is equivalent to the inequality
$$3d-2> \sum_{i=1}^r(m_i-2)=n_3+2n_4+3n_5+ \ldots+(d-3)n_{d-1},$$
where $n_j$ denotes the number of singularities of $C$ of multiplicity $j$.
\end{cor}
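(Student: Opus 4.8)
The plan is to deduce this directly from Theorem \ref{thm1}, since a rational curve is by definition irreducible and has normalization $\PP^1$. First I would record the only geometric input needed: if $C$ is rational, then its normalization $\tilde C$ is isomorphic to $\PP^1$, whose (topological) Euler characteristic is $E(\tilde C)=E(\PP^1)=2$. Everything else is bookkeeping.

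Next I would specialize the formula of Theorem \ref{thm1} to the case $k=2$. Substituting $E(\tilde C)=2$ gives
$$\sigma_2(C)=3d-2-\sum_{i=1}^r(m_i-2).$$
Since Conjecture \ref{c2} asserts precisely that $\sigma_2(C)>0$ for an irreducible reduced singular curve, and a singular rational curve is such a curve, the conjecture holds for $C$ if and only if $3d-2-\sum_{i=1}^r(m_i-2)>0$, that is, if and only if $3d-2>\sum_{i=1}^r(m_i-2)$. This establishes the inequality part of the statement.

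It then remains to rewrite the sum $\sum_{i=1}^r(m_i-2)$ in the indicated form. I would group the $r$ singular points according to their multiplicity, so that there are $n_j$ points of multiplicity $j$ for each $j\geq 2$. Because each point of multiplicity $j$ contributes $j-2$ to the sum, and points of multiplicity $2$ contribute $0$, this gives
$$\sum_{i=1}^r(m_i-2)=\sum_{j\geq 2}(j-2)\,n_j=n_3+2n_4+3n_5+\cdots+(d-3)\,n_{d-1}.$$
The sum terminates at $(d-3)n_{d-1}$ because a singular point of an irreducible plane curve of degree $d$ has multiplicity at most $d-1$: a line through a point of multiplicity $d$ would meet $C$ only there (by B\'ezout), forcing $C$ to be a union of lines through that point and hence reducible when $d\geq 2$.

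There is no serious obstacle here: the entire content of the corollary is already packaged in Theorem \ref{thm1}, and the remaining steps are the identification $E(\PP^1)=2$ together with a reindexing of a finite sum. If anything requires a moment's care it is pinning down the upper range of the reindexed sum, i.e. justifying that the multiplicities never exceed $d-1$, so that the displayed finite sum accounts for all possible singular points.
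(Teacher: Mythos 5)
Your proof is correct and is exactly the paper's argument: the paper derives Corollary \ref{cor0} immediately from Theorem \ref{thm1} by substituting $E(\tilde C)=2$ for a rational curve and setting $k=2$, just as you do. The only addition you make is the (harmless, slightly more careful) Bézout justification that multiplicities are at most $d-1$, which the paper leaves implicit in the reindexing $\sum_i(m_i-2)=n_3+2n_4+\cdots+(d-3)n_{d-1}$.
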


In spite of the large number of known results on the possible configurations of 
singularities of an irreducible  plane curve, see for instance \cite{GLS,GS}, 
the inequality in Corollary \ref{cor0} does not seem to be available, although 
no example is known over the complex numbers of a plane rational curve for which the inequality does not hold.

The next result gives some idea of where to look for curves for which 
the inequality in Corollary \ref{cor0} does not hold.

\begin{thm}
\label{thmNew}
If $C$ is a singular rational curve of degree $d$ with $r$ singular points, then 
the inequality in Corollary \ref{cor0} holds if
\begin{equation}
\label{eqNew}
(36-4r)d^2+d(r-1)48+r(8r-32)+16>0.
\end{equation}
In particular, the inequality in Corollary \ref{cor0}
holds for all $d>0$ if $r\leq 9$.
However, Inequality \eqref{eqNew} fails for all $d\gg0$ if $r>9$.
For example, it holds for no $d\geq 110$ if $r=10$.
\end{thm}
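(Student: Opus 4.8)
The plan is to reduce the statement to an elementary inequality between the two power sums $M_1=\sum_{i=1}^r m_i$ and $M_2=\sum_{i=1}^r m_i^2$, to control these by Cauchy--Schwarz, and then to verify that the resulting sufficient condition is literally \eqref{eqNew}.

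First I would rewrite both the hypothesis and the conclusion in terms of $M_1$ and $M_2$. Because $C$ is rational we have $E(\tilde C)=2$, so comparing the definition $\sigma_k(C)=d^2-M_2+kr$ with the formula of Theorem \ref{thm1} gives the identity
\begin{equation*}
M_2=M_1+(d-1)(d-2)=M_1+d^2-3d+2 .
\end{equation*}
In the same notation the inequality of Corollary \ref{cor0}, namely $3d-2>\sum_{i}(m_i-2)=M_1-2r$, is just
\begin{equation*}
M_1<3d-2+2r .
\end{equation*}
So the whole problem becomes: bound $M_1$ from above and compare with $3d-2+2r$.

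The key step is Cauchy--Schwarz, $M_1^2=\bigl(\sum_i m_i\bigr)^2\le r\sum_i m_i^2=rM_2$. Substituting the identity above gives $M_1^2-rM_1-r(d^2-3d+2)\le 0$, hence
\begin{equation*}
M_1\le M_1^{\max}:=\frac{r+\sqrt{\,r^2+4r(d^2-3d+2)\,}}{2}.
\end{equation*}
It therefore suffices to show that \eqref{eqNew} is equivalent to $M_1^{\max}<3d-2+2r$. Clearing the $2$ and isolating the radical turns this into $\sqrt{r^2+4r(d^2-3d+2)}<6d-4+3r$; the right-hand side is positive for $d\ge1$ and $r\ge1$, so squaring is reversible, and expanding $(6d-4+3r)^2-\bigl(r^2+4r(d^2-3d+2)\bigr)$ and collecting the coefficients of $d^2,d,r^2,r$ and the constant term reproduces exactly the left-hand side of \eqref{eqNew}. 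Thus \eqref{eqNew} forces $M_1\le M_1^{\max}<3d-2+2r$, which is the inequality of Corollary \ref{cor0}. I expect this matching of the two quadratics to be the only place where care is needed; it is routine but an arithmetic slip is easy, so I would expand $(6d-4+3r)^2$ term by term.

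Finally I would settle the three explicit claims by treating the left-hand side of \eqref{eqNew} as a quadratic $f_r(d)=(36-4r)d^2+48(r-1)d+\bigl(r(8r-32)+16\bigr)$ in $d$. For $r\le9$ the leading coefficient $36-4r$ is $\ge0$ and the linear coefficient $48(r-1)$ is $\ge0$, so the vertex lies at $d\le0$ and $f_r$ is nondecreasing for $d\ge0$; since $f_r(1)=8r^2+12r+4=4(2r+1)(r+1)>0$, we get $f_r(d)>0$ for every integer $d\ge1$, which by the previous step yields the inequality of Corollary \ref{cor0} for all $d>0$. For $r>9$ the leading coefficient is negative, so $f_r(d)\to-\infty$ and \eqref{eqNew} fails for all large $d$. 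For the sharp value at $r=10$ I would solve $f_{10}(d)=-4d^2+432d+496\le0$ directly: the larger root is $\tfrac{108+\sqrt{12160}}{2}\approx 109.1$, so $f_{10}(109)=60>0$ while $f_{10}(d)<0$ for every integer $d\ge110$. The real obstacle here is conceptual rather than computational: recognizing that Cauchy--Schwarz is the correct tool and that its output coincides with \eqref{eqNew}; once that identification is made, everything else is elementary analysis of a one-variable quadratic.
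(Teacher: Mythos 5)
Your proposal is correct and follows essentially the same route as the paper: both reduce the inequality of Corollary \ref{cor0} to the upper bound $\sum_i m_i\le\frac{r+\sqrt{r^2+4r(d-1)(d-2)}}{2}$ obtained from the genus formula $\sum_i m_i(m_i-1)=(d-1)(d-2)$ (the paper cites a convexity lemma from \cite{BDHLPS} where you invoke Cauchy--Schwarz, but the bound is identical), and then both verify that the resulting condition is exactly the quadratic \eqref{eqNew} and analyze it for $r\le 9$, $r>9$ and $r=10$. Your arithmetic checks out, including $f_{10}(109)=60>0$ and $f_{10}(110)<0$, matching the paper's threshold $d\ge 110$.
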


\begin{rk}
\label{rk0}
Note that the inequality in Corollary \ref{cor0} puts no constraints on $n_2$.
Indeed, it is known for each $d\geq 1$ that there 
are degree $d$ rational curves whose singularities are all nodes. Thus the number $n_2$ of nodes is
the maximum allowed by the genus formula, namely
$$n_2= \frac{(d-1)(d-2)}{2}.$$
These curves are discussed in more detail in Example \ref{xxx} (also see \cite{Oka}).
\end{rk}

\begin{rk}
\label{rk21} Assume that $C$ is a rational plane curve, having only nodes and ordinary triple points as singularities. Then 
the condition $n_3<3d-2$ in Corollary \ref{cor0} is equivalent by the genus formula to 
$$n_2> \frac{d^2-21d+14}{2}.$$
Hence one way to construct counterexamples to Conjecture \ref{c2} 
might be to construct a rational plane curve, having only nodes and ordinary triple points 
as singularities, of degree $d>20$ with as few nodes as possible.
We have no idea whether such counterexamples exist, but lower degree examples suggest the following
variation of Conjecture \ref{c1.4}.
\end{rk}

\begin{conj}
\label{c20}
Let $C$ be a reduced plane curve having only nodes and triple points as singularities.
If $C$ has degree $d>9$, or if $C$ is irreducible and has degree $d>8$, then the number $n_2$ of nodes
is strictly positive.
\end{conj}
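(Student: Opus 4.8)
The plan is to argue by contraposition: supposing $n_2=0$, so that every singular point of $C$ is an ordinary triple point, I would bound the degree and show $d\le 9$ in general and $d\le 8$ when $C$ is irreducible. The substantive case is that of triple-point-rich curves, those whose components have low genus; this is the setting of the motivating Conjecture \ref{c1.4}, where all components are rational, and it is where the known extremal configurations $(9,3_{12})$ and $(8,3_7)$ live. For an irreducible rational curve the genus formula gives $3n_3=\binom{d-1}{2}$, that is $n_3=\tfrac{(d-1)(d-2)}{6}$, which already excludes $d=9$ on integrality grounds and grows quadratically in $d$; for several rational components one has the analogous per-component relations.

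First I would reduce the reducible case to the irreducible one by a component analysis. Write $C=C_1\cup\cdots\cup C_s$ with $\deg C_j=d_j$. The hypothesis $n_2=0$ says that no two local branches of $C$ cross in an ordinary node, so at each triple point the three smooth branches are partitioned among the components and every B\'ezout intersection $C_i\cdot C_j=d_id_j$ is realized entirely at triple points. Recording at each triple point how many branches each component contributes gives an integer matrix with all row sums equal to $3$, in which the B\'ezout numbers $d_id_j$ appear as the pairwise overlaps of distinct columns; together with the per-component genus formulas this is a rigid Diophantine system in $(d_1,\dots,d_s)$ which, for $d>9$, one aims to prove inconsistent, thereby forcing a node.

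The natural engine over $\C$ is a Hirzebruch-type, or orbifold Bogomolov--Miyaoka--Yau, inequality for the log surface $(\PP^2,C)$ all of whose singularities are ordinary triple points. Computing the log Chern numbers in terms of $d$ and $n_3$ and invoking $\bar c_1^2\le 3\bar c_2$ yields a constraint relating $d$ and $n_3$; what the conjecture requires, however, is an \emph{upper} bound on $n_3$ strictly below the quadratic value allowed by the genus formula and violated once $d>9$.

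The hard part is that no presently known inequality supplies such a bound. Already for line arrangements, where $n_2=0$ is the classical problem of arrangements with only triple points, Hirzebruch's inequality reads $t_2+\tfrac34 t_3\ge d$; with $t_2=0$ the relation $3t_3=\binom{d}{2}$ turns it into $\tfrac{d-1}{8}\ge 1$, i.e.\ $d\ge 9$, so it is satisfied for \emph{every} $d\ge 9$ and is met with equality by $(x^3-y^3)(x^3-z^3)(y^3-z^3)=0$. Thus the standard log-BMY machinery is sharp at $d=9$ but powerless beyond it, and Conjecture \ref{c20} contains this long-open combinatorial problem as a special case; moreover in positive characteristic even this engine is unavailable, and the sporadic configurations $(7,3_7)$ in characteristic $2$ and $(9,3_{12})$ in characteristic $3$ show that any valid argument must be characteristic-sensitive. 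The essential obstacle is therefore to produce a genuinely sharper constraint on triple-point-only curves---for instance a refined Langer-type inequality exploiting the exact local contribution of an ordinary triple point, or an arithmetic argument playing off the two sporadic positive-characteristic examples---strong enough to defeat the quadratic growth of $n_3$ while remaining consistent with the two known extremal configurations.
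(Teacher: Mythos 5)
There is no proof of this statement in the paper for your attempt to be measured against: Conjecture \ref{c20} is posed as an open problem. The only things the paper establishes about it are partial and conditional, namely that for irreducible $C$ it is immediate when $d\equiv 0\pmod 3$ (integrality of $n_3=\tbinom{d-1}{2}/3$ in the rational case), that it would follow for $d\geq 21$ from Conjecture \ref{c2} via Remark \ref{rk21}, and that Proposition \ref{prop1} settles rational curves of degrees $5$, $6$, $7$ by elementary B\'ezout arguments. Your writeup, which candidly stops at ``no presently known inequality supplies such a bound,'' is therefore an accurate diagnosis rather than a proof, and your reasons for pessimism match the authors' framing exactly: Hirzebruch's inequality is met with equality by the dual Hesse arrangement at $d=9$ and gives nothing beyond it, and the configurations $(7,3_7)$ in characteristic $2$ and $(9,3_{12})$ in characteristic $3$ force any argument to be characteristic-sensitive. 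You have not found a proof, but neither does the paper claim one; you should simply not present this as a proof attempt at all.

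Two cautions if you try to push the sketch further. First, the identity $3n_3=\tbinom{d-1}{2}$ that drives your quadratic growth of $n_3$ holds only for irreducible \emph{rational} curves; in general Equation \eqref{genusEquation2} gives only $3n_3=\tbinom{d-1}{2}-(s-1)-\sum_j g_j$, and $n_3$ can be arbitrarily small. Indeed, read literally with no genus restriction the statement fails outright: a general curve of degree $10$ singular only at one ordinary triple point is irreducible with $n_2=0$. So the rationality of the components, inherited from Conjecture \ref{c1.4} and Remark \ref{rk21}, is not merely ``the substantive case'' as you put it but a hypothesis that must be carried explicitly throughout; your contrapositive only has content under it. Second, your reduction of the reducible case to a rigid Diophantine system assumes every triple point is ordinary (three smooth transverse branches shared out among components realizing $C_i\cdot C_j=d_id_j$ with multiplicity one); the conjecture as stated allows non-ordinary triple points and infinitely near singularities, where the branch-counting matrix must be replaced by local intersection multiplicities and the rigidity you rely on largely evaporates.
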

This conjecture for irreducible $C$ is obviously true when $d \equiv 0$ (mod $3$), and it holds
for $d \geq 21$  if Conjecture \ref{c2} holds
in this range. In view of the existence of rational nodal curves, as recalled in Remark \ref{rk0}, 
Conjecture \ref{c20} looks quite surprising. 
In contrast, for irreducible complex $C$ we have, as far as we know, only 
quartics (such as $C:(x^2+y^2)^2+y^3-3yx^2=0$,
a rational curve with one ordinary triple point and no other singularity; see Figure \ref{rose})
and octics (such as the complex rational octic curve with $n_3=7$ triple points
shown in Figure \ref{irredOctic}). These two examples occur (with different equations)
in every characteristic. Thus the condition $d>8$ in Conjecture \ref{c20} is necessary.
Note that in degrees $d>8$, $d \not \equiv 0$ (mod $3$), the non-existence of plane curves with 
$$n_3=\frac{(d-1)(d-2)}{6}$$
triple {\it generic} points follows from \cite{Dumi}, since the expected dimension in this case is negative.
In low degrees $d\leq 7$, we have the following easy result.

\begin{prop}
\label{prop1}
For a rational plane curve, having only nodes and  
triple points as singularities, of degree  $d \in \{5,6,7\}$, the number $n_2$ of nodes
is strictly positive. 
\end{prop}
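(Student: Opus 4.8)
The plan is to argue by contradiction: I would suppose $n_2 = 0$ and derive a contradiction separately in each of the three degrees, combining the genus formula with Bézout's theorem. The first step is to record the genus formula. Since $C$ is an irreducible rational curve whose only singularities are nodes and ordinary triple points, and since such singularities carry no infinitely near singular points and have delta-invariants $1$ and $3$ respectively, the geometric genus being $0$ yields
$$0 = \frac{(d-1)(d-2)}{2} - n_2 - 3n_3,$$
so that $n_2 + 3n_3 = (d-1)(d-2)/2$. Under the assumption $n_2 = 0$ this forces $3n_3 = (d-1)(d-2)/2$, which I would then examine case by case.

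For $d = 6$ the right-hand side equals $10$, which is not divisible by $3$; hence $n_3$ cannot be an integer and $n_2 = 0$ is already impossible. For $d = 5$ and $d = 7$ the divisibility holds (giving $n_3 = 2$ and $n_3 = 5$ respectively), so I would invoke Bézout. In the quintic case, the line $\ell$ joining the two triple points meets $C$ with intersection multiplicity at least $3$ at each of them, whence $\ell \cdot C \geq 6 > 5$; since $C$ is irreducible of degree $5$ the line is not a component, so Bézout forces $\ell \cdot C = 5$, a contradiction. In the septic case, I would use that the linear system of plane conics has projective dimension $5$ and that passing through a point imposes a single linear condition, so there is a conic $Q$ through all five triple points. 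This $Q$ meets $C$ with multiplicity at least $3$ at each of the five points, so $Q \cdot C \geq 15 > 14 = 2 \cdot 7$, contradicting Bézout since the irreducible degree-$7$ curve $C$ shares no component with the degree-$2$ curve $Q$.

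The main obstacle, and the only nonroutine point, is the $d = 7$ case: one must produce a conic through all five triple points (guaranteed by the dimension count on the linear system of conics) and confirm that it shares no component with the irreducible septic, so that Bézout applies cleanly and the estimate $Q \cdot C \geq 15$ collides with $Q \cdot C = 14$. The remaining cases are comparatively immediate, being either a pure divisibility obstruction ($d = 6$) or the elementary line-through-two-points bound ($d = 5$). Since $n_2 = 0$ is impossible in all three degrees, we conclude $n_2 > 0$.
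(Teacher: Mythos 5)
Your proposal is correct and follows essentially the same route as the paper's own proof: the genus formula $n_2+3n_3=(d-1)(d-2)/2$, the divisibility obstruction for $d=6$, the line through the two triple points for $d=5$, and the conic through the five triple points for $d=7$, each concluded by B\'ezout against the irreducibility of $C$. No substantive differences.
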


\section{The proofs of Theorem \ref{rkLink}, Theorem \ref{thm1},  Corollary \ref{cor-1} and Corollary \ref{cor1}} 

We begin by rewriting $H$-constants using adjunction (i.e., using the genus formula).

\begin{prop}\label{genusFormula}
Let $C$ be a reduced plane curve of degree $d$ with $r$ singular points $p_i$ of multiplicities $m_i$ and $s$ components $B_j$.
Let $g_j$, $j=1,\ldots,s$ be the genus of the normalization $A_j$ of $B_j$.
Then we have 
$$H(C)=\frac{3d+\sum_j(2g_j-2)-\sum_{i=1}^rm_i}{r}\geq \frac{3d-2s-\sum_{i=1}^rm_i}{r}\geq \frac{d-\sum_{i=1}^rm_i}{r}>-\overline{m},$$
where $\overline{m}$ is the average of the multiplicities of the singular points. 
Moreover, it is clear that the first inequality is an equality if and only if all of the components of $C$ are rational, and the second
inequality is an equality if and only if all of the components are lines.
\end{prop}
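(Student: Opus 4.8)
The plan is to pass to the smooth surface $\tX$ obtained by blowing up $\PP^2$ at the (possibly infinitely near) singular points $p_1,\ldots,p_r$, and to read $H(C)=(C')^2/r$ off from the adjunction formula there. As noted just after \eqref{eq1}, on $\tX$ the proper transform $C'$ is a disjoint union of smooth curves; concretely $C'=A_1+\cdots+A_s$, where $A_j$ is the smooth embedded normalization of the component $B_j$, and the $A_j$ are pairwise disjoint. I would work in the standard orthogonal basis of $\operatorname{Pic}(\tX)$ consisting of the pullback $L$ of the hyperplane class together with the total transforms $e_1,\ldots,e_r$ of the exceptional curves, so that $L^2=1$, $L\cdot e_i=0$ and $e_i\cdot e_j=-\delta_{ij}$. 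In this basis one has $[C']=dL-\sum_{i=1}^r m_ie_i$, since $m_i$ is by definition the multiplicity of the proper transform of $C$ at the point blown up at the $i$-th stage, and $K_{\tX}=-3L+\sum_{i=1}^r e_i$.

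The key computation is then immediate. Applying adjunction on the smooth surface $\tX$ to each $A_j$ gives $2g_j-2=A_j^2+K_{\tX}\cdot A_j$, and summing over $j$, using that the $A_j$ are disjoint so that $(C')^2=\sum_j A_j^2$, yields
\[
\sum_{j=1}^s(2g_j-2)=(C')^2+K_{\tX}\cdot C'.
\]
A direct intersection-number computation in the chosen basis gives $K_{\tX}\cdot C'=\sum_{i=1}^r m_i-3d$. Substituting and dividing by $r$ produces the claimed identity
\[
H(C)=\frac{(C')^2}{r}=\frac{3d+\sum_{j}(2g_j-2)-\sum_{i}m_i}{r}.
\]

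The three remaining relations are elementary and carry their equality conditions with them. The first inequality follows from $g_j\ge0$, replacing $\sum_j(2g_j-2)$ by its minimum $-2s$, with equality precisely when every $g_j=0$, i.e.\ all components are rational. The second inequality amounts to $3d-2s\ge d$, that is $s\le d$, which holds because the $s$ components have degrees at least $1$ summing to $d$; equality holds exactly when every component has degree $1$, i.e.\ all components are lines. Finally, writing $\sum_i m_i=r\,\overline m$, the last quantity equals $(d/r)-\overline m$, which is strictly larger than $-\overline m$ because $d\ge1$ and $r\ge1$.

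The only point requiring genuine care is the bookkeeping for infinitely near singular points. Here I would stress that, once one uses the total transforms $e_i$ rather than the strict transforms, both $[C']=dL-\sum_i m_ie_i$ and $K_{\tX}=-3L+\sum_i e_i$ remain valid for an arbitrary tower of point blowups, so no separate treatment of infinitely near points is needed and the definition of the $m_i$ matches the coefficients appearing above. Everything else is a formal consequence of adjunction together with the three trivial estimates $g_j\ge0$, $s\le d$, and $d,r\ge1$.
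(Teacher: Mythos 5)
Your proof is correct and follows essentially the same route as the paper's: blow up all (including infinitely near) singular points, apply adjunction to the disjoint smooth components $A_j$ of the proper transform, sum to get $(C')^2+K\cdot C'=\sum_j(2g_j-2)$, and then use $g_j\ge 0$ and $s\le d$ for the inequalities. The only cosmetic difference is that you make the intersection-theoretic bookkeeping explicit via the orthogonal basis of total transforms, whereas the paper computes $(C')^2$ and $C'\cdot K$ directly; the content is identical.
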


\begin{proof}
Let $X_1\to\PP^2$ be the morphism obtained by blowing up the points of $\PP^2$ where $C$ is singular,
and let $X_2\to X_1$ be the morphism obtained by blowing up the points 
at which the proper transform $C_1\subset X_1$ of $C$ is singular (these will all be points infinitely near
points already blown up). Recursively, let
$X_{i+1}\to X_i$ be the blow up of the singular points of the proper transform $C_i$ of $C$ to $X_i$,
and let $C_{i+1}$ be the proper transform of $C$ to $X_{i+1}$.
For some $t$ we will have blown up all of the singular points. Let $X=X_t$. Then $C'=C_t\subset X$ is smooth. 

We have $(C')^2=d^2-\sum_i m_i^2$ and $C'\cdot K_{X_t}=-3d+\sum_i m_i$.
Let $A_1+\cdots+A_s$ be the components of $C'$. These are smooth by construction; 
let $g(A_j)$ be the genus of the curve $A_j$.
By adjunction we have $A_j^2+A_j\cdot K_{X_t}=2g_j-2$, and since the components are (again by construction) disjoint,
we have $(C')^2+C'\cdot K_{X_t}=\sum_jA_j^2+\sum_j A_j\cdot K_{X_t}=\sum_j 2g_j-2$.
Thus 
\begin{equation}
\label{genusEquation}
d^2-\sum_i m_i^2=\sum_j (2g_j-2)+3d-\sum_i m_i,
\end{equation}
or, equivalently,
\begin{equation}
\label{genusEquation2}
\binom{d-1}{2}-\sum_i\binom{m_i}{2}-(s-1)=\sum_jg_j,
\end{equation}
so
$$H(C)=\frac{d^2-\sum_i m_i^2}{r}=\frac{\sum_j (2g_j-2)+3d-\sum_i m_i}{r}.$$
We have the obvious facts that $g_j\geq0$ and $d\geq s$, so 
$$\frac{\sum_j (2g_j-2)+3d-\sum_i m_i}{r}\geq \frac{3d-2s-\sum_i m_i}{r}\geq  \frac{d-\sum_i m_i}{r}>\overline{m}.$$
\end{proof}

\begin{proof}[The proof of Theorem \ref{rkLink}]
Suppose over the complex numbers we have $\inf H(C)>-\mu$ for some integer $\mu>1$.
Given any $n$ and $g$, to show Conjecture \ref{c1.3a} must hold for each $m\geq\mu$,
it suffices to show that failure of Conjecture \ref{c1.3a} for given $n$, $g$ and $m$ implies
$\inf H(C)\leq -m$.
Indeed, failure of Conjecture \ref{c1.3a} for given $n$, $g$ and $m$ implies
we have an infinite sequence of reduced, irreducible plane curves $C_i$ with
$r_i$ points of multiplicity $m$ with $r_1<r_2<\cdots$.
Let $d_i$ be the degree of $C_i$. Since the number $r_i$ of points of multiplicity $m$ is increasing,
for each $i$ there is a $j$ such that $d_j>d_i$, hence by picking a subsequence we may assume
that $d_i$ is increasing too.
Let $\gamma_i\leq g$ be the genus of the normalization of $C_i$ and assume 
the other singular points of $C_i$ have multiplicities $m_{i1}\leq m_{i2}\leq \cdots\leq m_{in_i}<m$ with $n_i\leq n$.
Then we have
$$\binom{d_i-1}{2}-\sum_{j=1}^{n_i}\binom{m_{ij}}{2} -r_i\binom{m}{2}=\gamma_i,$$
hence $r_i=(\binom{d_i-1}{2}-(\sum_{j=1}^{n_i}\binom{m_{ij}}{2}+\gamma_i))/\binom{m}{2} 
\geq ((d_i-1)(d_i-2)-(nm^2+2g))/(m(m-1))$.
Thus $r_i/d_i$ has limit 0, so we have 
$$\inf_i H(C_i)=\inf_i ((d_i-\sum m_i-r_im)/(n_i+r_i))\leq \inf_i ((d_i/r_i)-r_im/(n+r_i))=-m.$$
\end{proof}

Theorem \ref{thm1} is an immediate corollary of Proposition \ref{genusFormula},
so we proceed to Corollary \ref{cor-1}.

\begin{proof}[The proof of Corollary \ref{cor-1}]
(a) By Proposition \ref{genusFormula},
we have $H(C)>-\overline{m}$, hence $\sigma_k(C)=rH(C)+rk>r(k-\overline{m})$.
Now (b) and (c) are immediate.

(d) The first claim follows immediately from  Theorem \ref{thm1}, since $E(\tilde C) \leq 2$.
For the second claim, 
we have by Equation \ref{genusEquation2} (the genus formula with $s=1$) that
\begin{equation}
\label{eq23}
\binom{d-1}{2}\geq  \sum_{i=1}^r \binom{m_i}{2}.
\end{equation} 
But
$$\binom{m}{2} \geq 3(m-2)$$
for integers $m \geq 2$, so it follows that
$$\frac{(d-1)(d-2)}{6} \geq  \sum_{i=1}^r (m_i -2)$$
To end the proof of Corollary \ref{cor-1}, we observe that
$$3d-2 > \frac{(d-1)(d-2)}{6} $$
for any degree $d \leq 20$.
\end{proof}


\begin{ex}
\label{ex1} We give three examples of irreducible curves $C$ for which 
the inequality in \eqref{eq10} does not hold. 

(a) In positive characteristics, \cite{CV} constructs rational curves
for which $H(C)<-2$, and hence $\sigma_2(C)<0$, and therefore
the inequality in \eqref{eq10} does not hold. 

(b) We give two more examples, (i) and (ii), 
for which Inequality \eqref{eq10} does not hold. 

(i) Here we construct explicit complex irreducible curves $C$ for which 
the inequality in \eqref{eq10} does not hold. 
Let $3 \leq p <q $ be two relatively prime integers and consider the
irreducible curve
$$C_{p,q}: (x^p+y^p)^q-(y^q+z^q)^p=0.$$
As usual let $p_1,\ldots,p_r$ be the singular points of $C_{p,q}$ (including infinitely near singular points), 
but assume $p_1,\ldots,p_s$, for some $s\leq r$, are actual points of $\PP^2$.
One has $d=s=pq$ and $m_i=p$ for $1\leq i\leq s$, hence
$$3d-2 -\sum_{i=1}^r(m_i-2)\leq 3d-2 -\sum_{i=1}^s(m_i-2)= 3pq-2-pq(p-2)= pq(5-p)-2 <0$$
if $p\geq 5$. 
Note all of the singularities of $C$ are irreducible; this means $C$ is irreducible
(otherwise $C$ would have a reducible singularity where components meet).
This also means $E(\tilde C)=E(C)$, and hence
$$E(\tilde C)=2-(d-1)(d-2)+\sum_{i=1}^s\mu_i=pq(4-p-q),$$
where $\mu_i=(p-1)(q-1)$ is the Milnor number of the singularity $p_i$ (so the genus of 
$\tilde C$ is $pq((p+q-4)+2)/2$).

(ii) Now we give a characteristic free construction of irreducible curves $C$ 
where Inequality \eqref{eq10} does not hold. 
In this example, the singular points all are ordinary, hence there are no infinitely near singular points.
The genus of the curves in this example is large, which explains how 
Inequality \eqref{eq10} can fail while having $H(C)>-2$.
Pick 16 points on a smooth quartic plane curve $Q$. Pick a general element of the linear system
of curves of degree $d=5m$ vanishing to order $m$ at each of the 16 points.
For $m$ sufficiently large,
we will show that a general curve $C$ in this linear system will be irreducible and have ordinary singular points.
However $H(C)=9m^2/16$ while Inequality \eqref{eq10} states $15m-2>16(m-2)$, which fails for $m\geq 30$.
For $m\gg0$, the genus of $C$ is $g=(9m^2+m+2)/2$.
To justify our claims, let $X\to\PP^2$ be the blow up of the 16 points.
Let $C'$, $Q'$ and $L$ be the proper transforms of $C$, $Q$ and a general line, respectively.
Then $C'$ is linearly equivalent $m(Q'+L)$. But $Q'$ is nef so $Q'+L$ is ample, hence $C'$ is very ample
for $m\gg0$. By Bertini's Theorem (e.g., \cite[Theorem II.8.18]{Hr}), $C'$ is smooth and irreducible
and meets each of the 15 exceptional curves smoothly. Thus $C$ is irreducible and has only ordinary singular points.
\end{ex}

\begin{proof}[The proof of Corollary \ref{cor1}]
In view of Corollary \ref{cor-1}, we may assume that $d>20$.
As in the proof of Proposition \ref{genusFormula}, let $\pi:X\to\PP^2$ be the blow up 
of the singular points (including all infinitely near singular points).
Let $p_1,\ldots,p_r$ be the singular points and let $E_i$ be the exceptional locus
corresponding to $p_i$. (I.e., in the construction of $X\to\PP^2$ given in the proof of 
Proposition \ref{genusFormula}, if $p_i$ is an actual point of $X_j$, then
$E_i$ is the scheme theoretic inverse image of $p_i$ under $X\to X_j$.)
Let $L$ be the inverse image under $\pi$ of a general line in $\PP^2$.
Then the proper transform $\tilde C$ of $C$ on $X$ is linearly equivalent to
$dL-\sum_i m_iE_i$.

Let $C'$ be a curve of degree 8 containing the singularities of $C$ of multiplicity $3$
(i.e., $C'$ is the image of an effective divisor $C''$ on $X$ linearly equivalent to
$8L-\sum_{i_j}E_{i_j}$ where the sum is over those $i$ such that $m_i=3$.)

Since $C$ and hence $\tilde C$ is irreducible of degree greater than the degree of $C'$,
$\tilde C$ and $C''$ have no common irreducible component.
Hence $0\leq (\tilde C)\cdot C'' = 8d-3n_3$, so 
$$8d \geq   3n_3, $$
where  $n_3$ denotes the number of singular points of multiplicity $3$ on $C$.
But $d>20$, so
$$n_3 \leq \frac {8d}{3} < 3d-2$$
and the first claim follows from Corollary \ref{cor-1}(d).
For the second claim, note that the space of homogeneous polynomials in 3 variables of degree 8 has dimension 
$$\binom{10}{2}=45.$$
Therefore, there is such a nonzero homogeneous polynomial of degree 8 
vanishing at all of the triple points of $C$, since $n_3<45$. 
(More precisely, by Riemann-Roch for rational surfaces we have 
$h^0(X,\OO_X(C'')) \geq \frac{1}{2}((C'')^2-K_X\cdot C'')+1
= 45-n_3>0$.)
\end{proof}

\section{The proof of Proposition \ref{prop0},
of Theorem \ref{thmNew} and of Proposition \ref{prop1}} 

\begin{proof}[Proof of Proposition \ref{prop0}] 
Let $(C,q)$ be an isolated plane curve singularity. Then the usual definition of the 'double point number' $\delta(C,q)$ using the normalization morphism can be used, and the formula
$\delta(C,q)$  in terms of the multiplicities $m_j$ of the infinitely near points $q_j$, namely
\begin{equation}
\label{eq3}
\delta(C,q)=\sum \binom{m_j}{2},
\end{equation}
valid in any characteristic, where the sum is over all points $q_j$ equal to or infinitely near to $q$, see Formula (1) and section (1.3) in \cite{MW}. If $r(C,q)$ denotes the number of analytic branches of the singularity $(C,q)$, then one can define the corresponding Milnor number by the formula
\begin{equation}
\label{eq300}
\mu(C,q)=2 \delta(C,q)-r(C,q)+1,
\end{equation}
see section (1.3) in \cite{MW}. Assume now that $C$ is a reduced curve in the projective plane $\PP^2$ over an algebraically closed field. Let $q_1,\ldots,q_s$ be the points of $C$ which are singular and are actual points of $\PP^2$,
not infinitely near. Let $p_1,\ldots,p_r$ be the singular points of $C$, including all
infinitely near singular points. Thus $s\leq r$ and we may assume $q_1=p_1,\ldots,q_s=p_s$.
Then one can define, exactly as in section (1.3) in \cite{MW}, the total $\delta$-invariant of $C$ and the total Milnor number of $C$ using the formulas
\begin{equation}
\label{eq301}
\delta(C)=\sum_{j=1}^s \delta(C,q_j)=\sum_{i=1}^r\binom{m_i}{2},
\end{equation}
and
\begin{equation}
\label{eq302}
\mu(C)=\sum_{j=1}^s \mu(C,q_j).
\end{equation}
Let $\mu_i=\mu(C,q_i)$ and $r_i=r(C,q_i)$. Then one has by \eqref{eq300}
$$\mu_i=\sum_{j=1}^{r_i'}m_{ij} (m_{ij}-1)-r_i+1 $$
where $m_{ij}$ are the multiplicities of the infinitely near points to $q_i$.
If we replace this in the formula \eqref{eq302}, we get
\begin{equation}
\label{eq303}
\mu(C)=\sum_{i=1}^r m_i(m_i-1)-\sum_{j=1}^s(r_j-1)=\sum_{i=1}^r (m_i-1)^2-\sum_{j=1}^s(r_j-1)+\sum_{i=1}^r(m_i-1).
\end{equation}
Note that $r_i \leq m_i$ for all $i=1,...,s$, and hence we get
\begin{equation}
\label{eq304}
\mu(C)\geq \sum_{j=1}^r (m_i-1)^2.
\end{equation}
To prove the first claim of the Proposition, we set $k=2m_1-1$. Then one has
$$\sigma_k(C)=d^2-\sum_{i=1}^r(m_i^2-k) \geq d^2-\sum_{i=1}^r(m_i-1)^2 \geq d^2-\mu(C) \geq d^2-(d-1)^2=2d-1.$$
Indeed, one clearly has
$$m_i^2-k \leq (m_i-1)^2$$
for all $i=1,...,r$. On the other hand, the inequality $\mu(C) \leq (d-1)^2$ is well known in the complex case, see for instance the embedding of Milnor lattices (4.4.1) on p. 161 in \cite{DSTH}, or look at the papers \cite{Bru,D85}. To prove it in the case of characteristic $p>0$, we proceed as follows. Let $C'$ be a smooth curve in $\PP^2$ of degree $d$. Using the formula (3) in \cite{MW} we get
$$\chi(C)=2 \chi(\OO_C)+\mu(C)$$
and
$$ \chi(C')=2 \chi(\OO_{C'})=2(1-g)=2-(d-1)(d-2),$$
where $\chi(C)$ and $\chi(C')$ denote the corresponding $\ell$-adic Euler-Poincar\' e characteristic of $C$ and respectively of $C'$.
Since $C$ and $C'$ occur as fibers in a flat, proper family, it follows that
$$\chi(\OO_C)=\chi(\OO_{C'})=1-g$$
and hence
$$\mu(C)=\chi(C)-2 \chi(\OO_C)=\beta_0-\beta_1+\beta_2-2+(d-1)(d-2).$$
Here $\beta_j$ are the $\ell$-adic Betti numbers of $C$ and we know the following information, see section (1.3) in \cite{MW}.
\begin{enumerate}

\item $\beta_0=1$ since $C$ is connected.

\item $\beta_1 \geq 0$

\item $\beta_2=h$, the number of irreducible components of $C$. In particular $\beta_2 \leq d$.
\end{enumerate}
It follows that
$$\mu(C)\leq 1 +h -2 +(d-1)(d-2) \leq (d-1)^2$$
as we have claimed. This ends the proof of the first part of the Proposition.

For the second part, when $C$ is irreducible, we follow the same idea, just 
replacing the Milnor numbers with the $\delta$-invariants of the points $q_i$. 
So for $k=m_1$ we get the following inequalities:
$$\sigma_k(C)=d^2-\sum_{i=1}^r(m_i^2-k) \geq d^2-\sum_{i=1}^r(m_i-1)m_i \geq d^2-2\delta(C) \geq d^2-(d-1)(d-2)=3d-2.$$
Here we have used the obvious inequality
$m_i^2-m_1 \leq m_i(m_i-1)$ and
the genus formula, which holds in characteristic $p$ as well and implies $2\delta(C)\leq (d-1)(d-2))$.  
\end{proof}

\begin{proof}[Proof of Theorem \ref{thmNew}] 
For a rational plane curve $C$ of degree $d$ with $r$ singular points of multiplicities $m_1,\ldots,m_r$, 
we can bound $\sum_{i=1}^rm_i =2n_2+3n_3+4n_4+ \cdots+(d-1)n_{d-1}$ in terms of $r$ and $d$:
$$\sum_{i=1}^rm_i \leq \frac{r+\sqrt {r^2+4r(d-1)(d-2)}}{2}.$$
This is an equality exactly when all of the singular points have the same multiplicity.

To justify this inequality, let $r\mu(\mu-1)=(d-1)(d-2)$; then $\mu=\frac{r+\sqrt {r^2+4r(d-1)(d-2)}}{2}$.
But we have $(d-1)(d-2)=\sum_im_i(m_i-1)$ by the genus formula.
Thus $r\mu(\mu-1)=\sum_im_i(m_i-1)$, hence $r\mu\geq\sum_im_i$ 
with equality if and only if the $m_i$ are all equal (see for example \cite[Lemma 3.12]{BDHLPS}).

In particular, by Corollary \ref{cor0}, if 
$$3d-2>\frac{-3r+\sqrt {r^2+4r(d-1)(d-2)}}{2},$$
then Conjecture \ref{c2} holds.
This inequality is equivalent to 
$$(36-4r)d^2+d(r-1)48+r(8r-32)+16>0.$$
It is easy to check that this holds for all $d>0$ and $r\leq 9$, but fails for all $d\gg0$ if $r>9$.
Setting $r=10$, it is easy to see it holds if and only if $1\leq d\leq 109$.
\end{proof} 

\begin{proof}[Proof of Proposition \ref{prop1}] 
Assume  that $C$ is a rational plane curve, having only nodes and ordinary 
triple points as singularities. Let $d$ be the degree of $C$, so by the genus formula we have
$$n_2+3n_3=\frac{(d-1)(d-2)}{2}.$$
We have the following case-by-case discussion.
\vskip\baselineskip

\noindent(i) When $d=5$, there is no irreducible rational quintic with only 2 triple points.
Indeed, the line $L$ joining these two points would have intersection
$C \cdot L \geq 3+3=6$, and hence $L$ would be a component of $C$. This is not possible since $C$ is irreducible.
\vskip\baselineskip

\noindent(iii) When $d=6$, the case $n_2=0$ is not possible since $n_2+3n_3=10$.
(However, there is an irreducible sextic having $n_2=1$ and $n_3=3$; 
see Example 3.5 and Example 3.11 in \cite{BGI}. 
One can obtain such an example by applying a quadratic transformation
centered at the coordinate vertices to a nodal cubic 
for which each coordinate line meets the cubic
in three distinct points, such as
$F=x^3+4x^2y+4xy^2+y^3-6x^2z-2xyz-6y^2z-xz^2-yz^2+6z^3$,
which has its node at $(1,1,1)$.
Applying the quadratic transformation
means substituting $1/x$ for $x$, $1/y$ for $y$ and $1/z$ for $z$,
and multiplying by $(xyz)^3$; this gives
the sextic 
$G=6x^3y^3-x^3y^2z-x^2y^3z-6x^3yz^2-2x^2y^2z^2-6xy^3z^2+x^3z^3+4x^2yz^3+4xy^2z^3+y^3z^3$.
It still has a node at $(1,1,1)$ but it now has ordinary triple points at the coordinate vertices.)
\vskip\baselineskip

\noindent(iv) When $d=7$, there is no irreducible rational curve $C$ with only 5 triple points. Indeed, if such a curve exists, then choose a conic $Q$ passing through these 5 triple points. We have $C \cdot Q \geq 3 \cdot 5=15>14=\deg (C) \cdot \deg (Q)$. Hence $C$ and $Q$ would have a common irreducible component. This is not possible since $C$ is irreducible.
\end{proof}

\section{Examples} 

First we exhibit two standard examples referred to earlier.

\begin{ex}\label{xxx}
Here we give a sequence of rational curves $C_d$ of degree $d$
such that $H(C_d)$ approaches $-2$ asymptotically from above.
We define $C_d$ to be the image of $\PP^1$ under a general map of $\PP^1$ into $\PP^2$ given by 
three degree $d$ polynomials in $k[\PP^1]=k[x,y]$ over a field $k$.
Then $C_d$ has $\binom{d-1}{2}$ singular points of multiplicity 2. 
Thus there is no upper bound (in any characteristic) to the number of double points on
plane curves of genus 0. (This relates in a weak way to Bounded Negativity.
If $X\to\PP^2$ is the morphism given by
blowing up these points, then the proper transform $C_d'$ of $C_d$ on $X$ has $C'=d^2-2(d-1)(d-2)
=-d^2+6d-4$ and thus becomes arbitrarily negative as $d$ grows.)
We have
$$H(C_d)=\frac{d-2\binom{d-1}{2}}{\binom{d-1}{2}}=-2+\frac{d}{\binom{d-1}{2}}>-2,$$
thus $H(C_d)$ achieves $-2$ only in the limit.
\end{ex}

\begin{ex}\label{yyy}
Here we give an easy construction of reduced curves $C$ to show that $H(C)$ can be arbitrarily negative 
in any positive characteristic. 
Take $C$ to be the union of all lines defined over a finite subfield $k$ of the ground field $K$
when $K$ has positive characteristic. If $q$ is the order of $k$, 
then there are $q^2+q+1$ lines, hence $C$ has degree $q^2+q+1$. The singular points of
$C$ are precisely the $k$-points of $\PP^2_k$ and each has multiplicity $q+1$, so
$H(C)=(q^2+q+1-(q^2+q+1)(q+1))/(q^2+q+1)=-q$.
\end{ex}

\begin{ex}\label{zzz}
A reduced, irreducible rational curve $C$ of degree $d$ with only triple points would,
by Equation \ref{genusEquation}, have 
$r=(1/3)\binom{d-1}{2}$, hence 
$H(C)=\frac{3d-2-\binom{d-1}{2}}{(1/3)\binom{d-1}{2}}=-3+\frac{3(3d-2)}{\binom{d-1}{2}}$,
and this approaches $-3$ asymptotically from above as $d$ increases.
\end{ex}

\subsection{The multiplicity sequences of the form $(d,3_r)$ and genus bound 0 currently known
for complex plane curves}
Here we exhibit complex curves all of whose singular points are triple points.
We do this for every multiplicity sequence for which we are aware of such a curve.
It happens that in each case we can give a curve with only ordinary singularities.
The examples we give here are not new; we include explicit constructions for the convenience of the reader.

We begin with reduced, irreducible examples.
Figure \ref{rose} shows a reduced, irreducible complex curve having multiplicity sequence $(4,3)$.

\begin{figure}[h]
\begin{tikzpicture}[line cap=round,line join=round,x=1.5cm,y=1.5cm]
\draw[thick,variable=\t,domain=-100:100,samples=103] plot ({cos(\t)*sin(3*\t)},{sin(\t)*sin(3*\t)});
\end{tikzpicture}
\caption{The rose $r=\sin(3\theta)$, or $(x^2+y^2)^2+y^3-3yx^2=0$.}
\label{rose}
\end{figure}
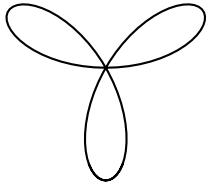

Figure \ref{irredOctic} shows a reduced, irreducible complex curve having multiplicity sequence $(8,3_7)$.
It is an example of a homaloidal curve, meaning that it can be brought to a straight line by a Cremona transformation.
Using this fact one can give an explicit equation for it. The curve $C$ we show here
was chosen not because its equation was attractive, but rather because its graph makes our claims obvious.
It is clear from the graph that the curve is irreducible of degree 8. Its graph shows it has at least 7 
singular points of multiplicity at least 3 each. By the genus formula, this already makes the curve rational,
and shows there are no further singularities and that the multiplicities are at most 3 and now we see the
the points are ordinary singular points.
Taking $z=1$, the affine coordinates $(x,y)$ of the singular points are: 
$(-3,2)$,
$(-2.4,3.6)$,
$(-1.2,-0.2)$,
$(0.5,3)$,
$(1.4,4.2)$,
$(2,-0.1)$ and
$(2.5,2.25)$. The curve also contains the smooth points
$(-2.8,0.6)$ and
$(1.8,1.9)$.
The number of conditions imposed by 7 triple points and two smooth points on the $\binom{8+2}{2}=45$ dimensional 
space of octics is at most $7\binom{3+1}{2}+2=44$. Thus there is at least one such octic; Macaulay2 \cite{M2}
confirms there is exactly one and by upper semicontinuity, for a general set of 9 points,
there will be exactly one octic curve for which the first 7 points are triple points and the last two points are smooth.

The general octic $C$ with triple points at 7 general points can be brought to a line by
applying quadratic transforms as follows.
Apply a quadratic transform centered at three of the triple points.
The multiplicity sequence of the image $C_1$ of $C$ is $(7,2_3, 3_4)$.
Applying a quadratic transform centered at three of the triple points of $C_1$
gives a curve $C_2$ with multiplicity sequence $(5,3,2_3)$
(the three triple points became smooth).
Applying a quadratic transform centered at the three double points
of $C_2$ gives a curve $C_3$ with multiplicity sequence $(4,3)$
(the three double points became smooth). 
Applying a quadratic transform centered at the triple point
and at two smooth points of $C_3$ gives a curve $C_4$
with multiplicity sequence $(3,2)$.
Applying a quadratic transform centered at the double point
and at two smooth points of $C_4$ gives a curve $C_5$
which is a smooth conic.
Applying a quadratic transform centered at the any three points
of $C_5$ gives a curve $C_6$
which is a line. Running the sequence of quadratic transforms in reverse
takes a line to the octic.

\begin{figure}[h]
\includegraphics[scale=.25]{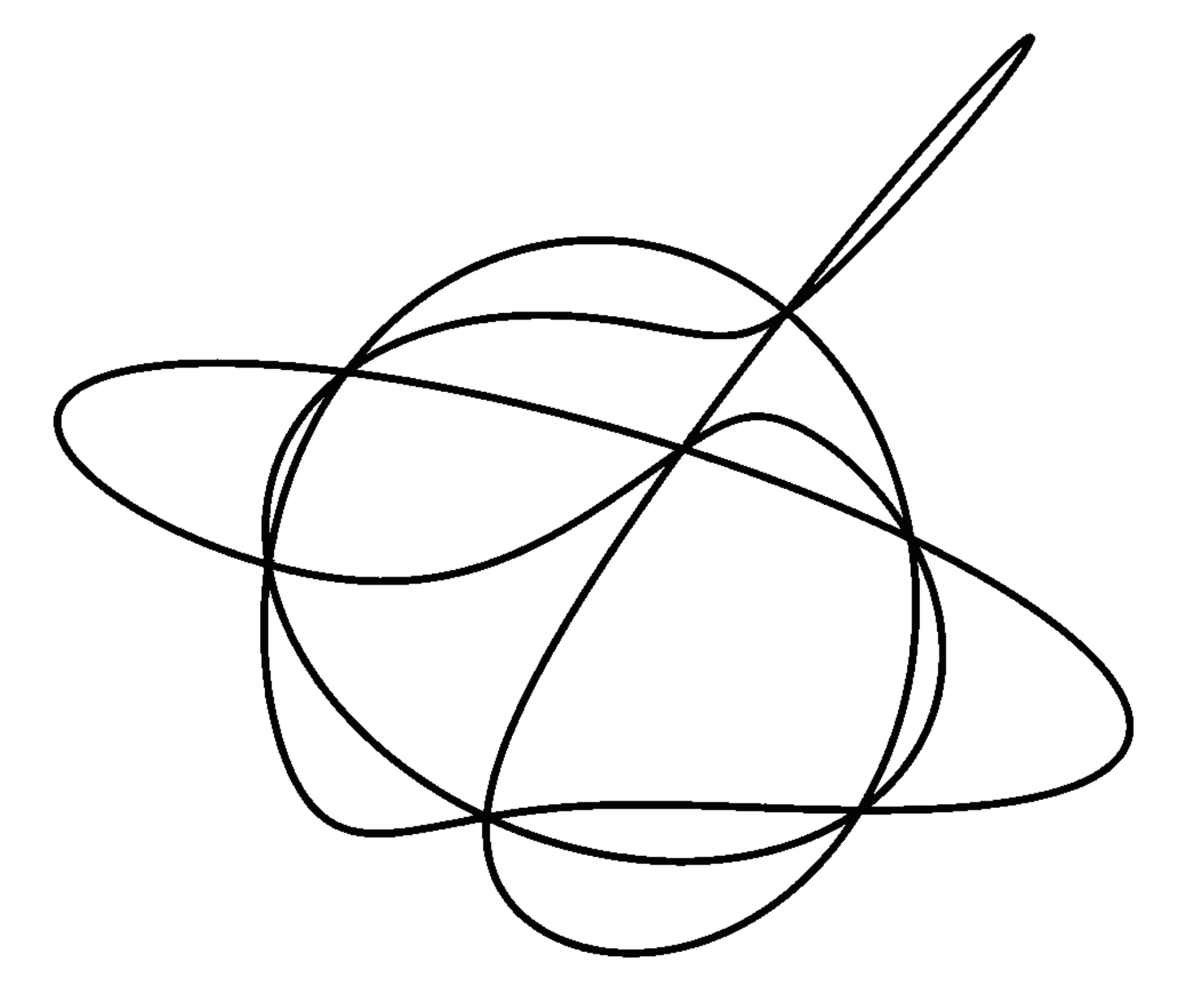}
\caption{Irreducible complex rational octic with 7 triple points.}
\label{irredOctic}
\end{figure}

Now we consider reduced, reducible examples.
First there is the case of 3 concurrent lines, so three members of a pencil of lines. 
The multiplicity sequence for this case is $(3,3)$.

Figure \ref{Conic3Lines} shows a reduced curve having multiplicity sequence $(5,3,3,3)$.

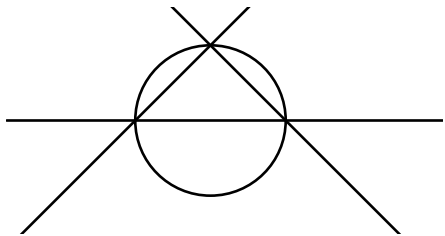
\begin{figure}[h]
\begin{tikzpicture}[line cap=round,line join=round,x=.5cm,y=.5cm]
\clip(-5.42,-3) rectangle (6.18,3);
\draw [line width=1.pt] (0.,0.) circle (1.cm);
\draw [line width=1.pt,domain=-5.42:6.18] plot(\x,{(--4.--2.*\x)/2.});
\draw [line width=1.pt,domain=-5.42:6.18] plot(\x,{(--4.-2.*\x)/2.});
\draw [line width=1.pt,domain=-5.42:6.18] plot(\x,{(-0.-0.*\x)/-4.});
\end{tikzpicture}
\caption{A conic and 3 lines with multiplicity sequence $(5,3,3,3)$.}
\label{Conic3Lines}
\end{figure}

Figure \ref{3conics} shows a reduced curve having multiplicity sequence $(6,3,3,3,3)$.
It consists of the union of 3 irreducible conics in a pencil of conics.

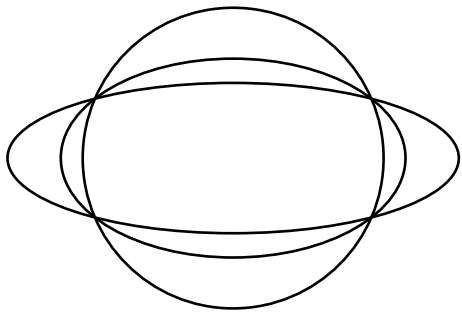
\begin{figure}[h]
\begin{tikzpicture}[line cap=round,line join=round,x=0.5cm,y=0.5cm]
\clip(-6.2,-4.46) rectangle (6.18,4.92);
\draw [line width=1.pt] (0.,0.) circle (2.cm);
\draw [rotate around={0.:(0.,0.)},line width=1.pt] (0.,0.) ellipse (3.cm and 1.cm);
\draw [rotate around={0.:(0.,0.)},line width=1.pt] (0.,0.) ellipse (2.29128784747792cm and 1.3228756555322954cm);
\end{tikzpicture}
\caption{Three conics in a pencil of conics with multiplicity sequence $(6,3,3,3,3)$: $y^2 + x^2 - 4=0$, $x^2 - 9 + 9y^2=0$
and $x^2 - 9 + 9y^2 + 3 (y^2 + x^2 - 4) = 0$.}
\label{3conics}
\end{figure}

Figure \ref{4conics} shows a reduced curve having multiplicity sequence $(8,3_8)$,
two of which are bolded merely for visual clarity.
It consists of the union of 4 conics, each conic containing 6 of the 8 triple points.
An example of such conics is
$9x^2+16y^2-300^2z^2=0$, $16x^2+9y^2-300^2z^2=0$, 
$16x^2-7xy+16y^2-300^2z^2=0$ and $16x^2+7xy+16y^2-300^2z^2=0$.
These conics are components of four cubics in the pencil of cubics whose 9 base points
are the 8 singular points of the union of the four conics
(namely $(\pm75:0:1)$, $(0,\pm75:1)$, $(\pm60:\pm60:1)$) and the point $(0:0:1)$.
The four cubics are
$A=x(16x^2+9y^2-300^2z^2)$, $B=y(9x^2+16y^2-300^2z^2)$, $C=(x+y)(16x^2-7xy+16y^2-300^2z^2)$
and $D=(x-y)(16x^2+7xy+16y^2-300^2z^2)$.
Note that $A+B=C$ and $A-B=D$.

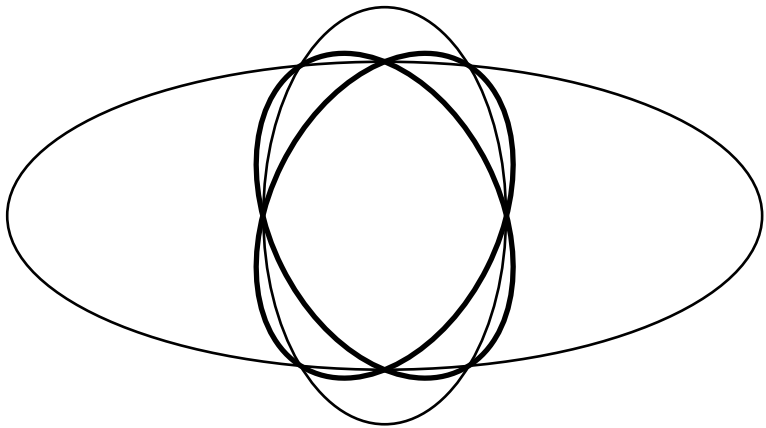
\begin{figure}[h]
\begin{tikzpicture}[line cap=round,line join=round,x=1.0cm,y=1.0cm]
\clip(-5.236226018427141,-3.576561003249869) rectangle (5.478112567456207,3.4192718381209963);
\draw [rotate around={0.:(0.,0.)},line width=1.pt] (0.,0.) ellipse (5.018148879997303cm and 2.048650701552048cm);
\draw [rotate around={116.56505117707799:(0.,0.)},line width=2.pt] (0.,0.) ellipse (2.29128784747792cm and 1.5275252316519468cm);
\draw [rotate around={-116.56505117707799:(0.,0.)},line width=2.pt] (0.,0.) ellipse (2.29128784747792cm and 1.5275252316519468cm);
\draw [rotate around={90.:(0.,0.)},line width=1.pt] (0.,0.) ellipse (2.7745119690977487cm and 1.613106529667628cm);
\end{tikzpicture}
\caption{Four conics giving an octic with multiplicity sequence $(8,3_8)$.}
\label{4conics}
\end{figure}

We can obtain the multiplicity sequence $(9,3_{10})$ using a curve in a Halphen pencil of order 3.
A Halphen pencil of order $m>1$ is a pencil of curves of degree $3m$ with 9 base points
where the general member is irreducible. There is a unique cubic through the 9 base points;
one of the members of the Halphen pencil is thus this cubic taken with multiplicity $m$.
For our example we want $m=3$, we want none of the 9 base points to be infinitely near another, 
and we want the pencil to have a reducible member $C$ consisting of three rational curves
which together have ordinary singularities at the base points and which
meet in one more ordinary singular point away from the 9 base points. (Thus $C$ gives rise to a 
curve $C'$ in the elliptic fibration 
corresponding to the Halphen pencil, where $C'$ has the topological type of three concurrent lines.
In Kodaira's classification of singular fibers
of elliptic fibrations, $C'$ has type IV.) 

It is easy to construct a pencil of cubics having a type IV member; see Figure \ref{CubicPencil}.
It is also easy to construct a Halphen pencil of order 2 with a type IV member:
basically take three conics with a given point in common (see Figure \ref{HalphenPencilOrd2};
also see \cite{Z} for constructions of various Halphen pencils of order 2).
Constructing examples of order $m>2$ is much harder. One of the problems in our case is that
it is not easy to ensure that one gets a curve of type IV instead of type $I_3$
(the latter has the topological type of three lines in the plane which are not concurrent).
We now sketch the construction of a Halphen pencil with a nonic having 10 ordinary triple points.

We will pick 9 points on the nodal cubic $A: 27xyz-(x+y+z)^3=0$.
This cubic $A$ has a node at $(1,1,1)$ and flexes at $F_0=(-1,1,0), F_1=(-1,0,1)$ and $F_2=(0,-1,1)$.
The curve $A$ will occur with multiplicity 3 in the Halphen pencil we will construct.
Using a nodal cubic has the advantage of having a parameterization and hence a lot of rational points.
Using this particular nodal cubic has the advantage that its flexes and node are rational
(but the branches at the node, unfortunately, are not real).

Our pencil will also have a reducible nonic curve $C$ of type IV. Two of the components of
$C$ will be irreducible quartics $Q_1$ and $Q_2$ with three nodes each. The third component of $C$ will be a line, $L$.
The 9 base points $p_1,\ldots, p_9$ of the pencil will be chosen so that
$Q_1$ has nodes at $p_1,p_2,p_3$ and is smooth at $p_4,\ldots,p_9$, while 
$Q_2$ has nodes at $p_4,p_5,p_6$ and is smooth at the other 6 base points.
In addition $L$ will contain $p_7,p_8$ and $p_9$, and all three curves, $Q_1, Q_2$ and $L$,
will meet pairwise transversely at a tenth point $p_{10}$.

We can parameterize $A$ as $x=-(t+1)^3$, $y=t^3$ and $z=1$. 
If we use $(-1,1,0)$ as the identity in the group law on the smooth points of $A$,
the inverse of a point $(a,b,c)$ on $A$ is $(b,a,c)$.
Given smooth points $(-(t+1)^3,t^3,1)$ and $(-(s+1)^3,s^3,1)$, the line through these points
meets $A$ in a third point, and this third point is  $((st-1)^3,-(st+s+t)^3,(s+t+1)^3)$.
Thus the group law is 
$$(-(t+1)^3,t^3,1) + (-(s+1)^3,s^3,1)=(-(st+s+t)^3,(st-1)^3,(s+t+1)^3).$$

In the group law on $A$, the intersection of $Q_1$ and $A$ can be written as
$2(p_1+p_2+p_3)+p_4+\cdots+p_9=0$.  
Then the intersection of $Q_2$ and $A$ is
$p_1+p_2+p_3+2(p_4+p_5+p_6)+p_7+p_8+p_9=0$.  
And the intersection of $L$ and $A$ is $p_7+p_8+p_9=0$.
Subtracting the second from the first gives $p_1+p_2+p_3-(p_4+p_5+p_6)=0$,
Subtracting the third from the first and adding the fourth gives $3(p_1+p_2+p_3)=0$,
and hence also $3(p_4+p_5+p_6)=0$.
We do not want $p_1+p_2+p_3=0$ or $p_4+p_5+p_6=0$ since this would mean the points
are collinear (which would mean that $Q_1$ and $Q_2$ would not be irreducible and thus we would not get a type IV fiber).
Thus we want $p_1+p_2+p_3$ to have 3-torsion, so $p_1+p_2+p_3=F_1$, for example, and likewise
$p_4+p_5+p_6=F_1$. Thus we want $p_3=F_1-p_1-p_2$ and $p_6=F_1-p_4-p_5$.
(We could use $F_2$ in place of $F_1$, but we cannot take $p_1+p_2+p_3=F_1$ and 
$p_4+p_5+p_6=F_2$, since then $p_1+\cdots+p_6=F_1+F_2=0$, which means the
points $p_1,\ldots,p_6$ lie on a conic, which would mean the conic is a component of both $Q_1$ and $Q_2$.)

So we get to pick $p_1,p_2,p_4,p_5,p_7,p_8\in A$; 
then $p_1,p_2$ determine $p_3$, while
$p_4,p_5$ determine $p_6$, and
$p_7,p_8$ determine $p_9$ (since $p_7,p_8,p_9$ are where $L$ and $A$ intersect).
Now $Q_1$ and $Q_2$ meet $L$ in 4 points each, including $p_7,p_8,p_9$.
One must be careful to pick $p_8$ so that $Q_1$ and $Q_2$ both meet $L$ in the same
fourth point, which we will designate as $p_{10}$.

Here are the choices we made:
$p_1=(-8,1,1)$,
$p_2=(-216,125,1)$,
$p_4=(-64,27,1)$,
$p_5=(-125,64,1)$,
$p_7=(-1,1,0)$ and
$p_8=(-(t+1)^3,t^3,1)$.
This forces
$p_3=(-11^3,7^3,4^3)$,
$p_6=(-19^3,8^3,11^3)$ and
$p_9=(t^3,-(t+1)^3,1)$.
By computation, the condition on $t$ 
to guarantee that $Q_1$ and $Q_2$ both meet $L$
at the same fourth point $p_{10}$ is
$117008(t+\frac{1}{2})^6+4295799(t+\frac{1}{2})^4-\frac{3637629}{2}(t+\frac{1}{2})^2+\frac{4822335}{16}=0$.
(This is a cubic in a square, so in principle we can find $t$ exactly. In fact, two of the roots
give a purely imaginary conjugate pair and the other 4 comprise two conjugate pairs which are not
real and not purely imaginary.)

One can now check by brute force computation that the union of $Q_1$, $Q_2$ and $L$
is a curve of degree 9 with 10 ordinary triple points (namely, $p_1,\ldots,p_{10}$) and no other singular points.
The fact that $Q_1$ (and likewise for $Q_2$)
is a quartic whose only singular points are three noncollinear nodes
implies that it is irreducible (consider cases to verify this; for example a cubic and a line can have 3 nodes,
but the nodes are collinear).
The points $p_8$ and $p_9$ are not real, and probably $p_{10}$ is not also.
Moreover, $L$ is not real, and probably neither are $Q_1$ and $Q_2$.
(It would be interesting to know if there is a real example of a curve of degree 9 with 10
triple points.)

\begin{figure}[h]
\definecolor{xdxdff}{rgb}{0.49019607843137253,0.49019607843137253,1.}
\begin{tikzpicture}[line cap=round,line join=round,x=1.0cm,y=1.0cm]
\clip(-4.44,-3) rectangle (7.16,4.44);
\draw [dashed,line width=1,  domain=-0.03:4, samples=100] plot ({\x}, {sqrt(2*\x*(\x-2)^2+0.25)} );
\draw [dashed,line width=1,  domain=-0.03:4, samples=100] plot ({\x}, {-sqrt(2*\x*(\x-2)^2+0.25)} );
\draw [dashed,line width=1,  domain=-.01:.01, samples=100] plot ({-0.03}, {\x} );
\draw [line width=1.pt,domain=-4.44:7.16] plot(\x,{(--0.6666666666666666--0.6666666666666666*\x)/1.});
\draw [line width=1.pt,domain=-4.44:7.16] plot(\x,{(--0.3333333333333333--0.3333333333333333*\x)/1.});
\draw [line width=1.pt,domain=-4.44:7.16] plot(\x,{(-0.3333333333333333-0.3333333333333333*\x)/1.});
\draw [fill=xdxdff] (0.045,0.7) circle (2.5pt);
\draw [fill=xdxdff] (1.1323076923076922,1.4215384615384614) circle (2.5pt);
\draw [fill=xdxdff] (3.0984615384615384,2.732307692307692) circle (2.5pt);
\draw [fill=xdxdff] (-0.01,0.33) circle (2.5pt);
\draw [fill=xdxdff] (1.604,0.868) circle (2.5pt);
\draw [fill=xdxdff] (2.462,1.154) circle (2.5pt);
\draw [fill=xdxdff] (-0.01,-0.33) circle (2.5pt);
\draw [fill=xdxdff] (1.604,-0.868) circle (2.5pt);
\draw [fill=xdxdff] (2.462,-1.154) circle (2.5pt);
\end{tikzpicture}
\caption{The nine base points of a cubic pencil defined by a Kodaira type IV member (i.e., three concurrent lines)
and a smooth cubic (dashed), with a third smooth cubic in the pencil (dotted), giving a
curve with multiplicity sequence $(9,3_{10})$ and genus bound 1.
Including another smooth cubic curve from the pencil gives an overall curve
with multiplicity sequence $(9,3_{10})$ and genus bound 1.}
\label{CubicPencil}
\end{figure}
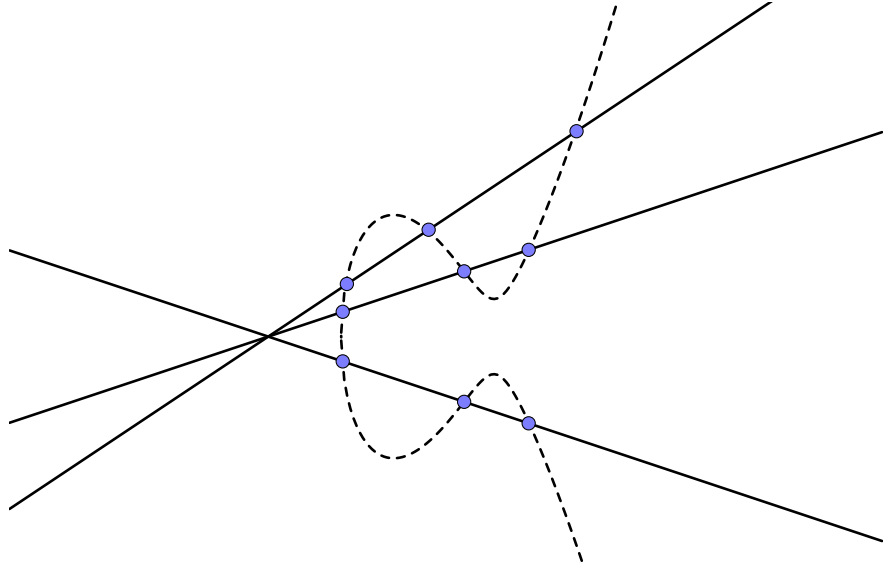

\begin{figure}[h]
\includegraphics[scale=.5]{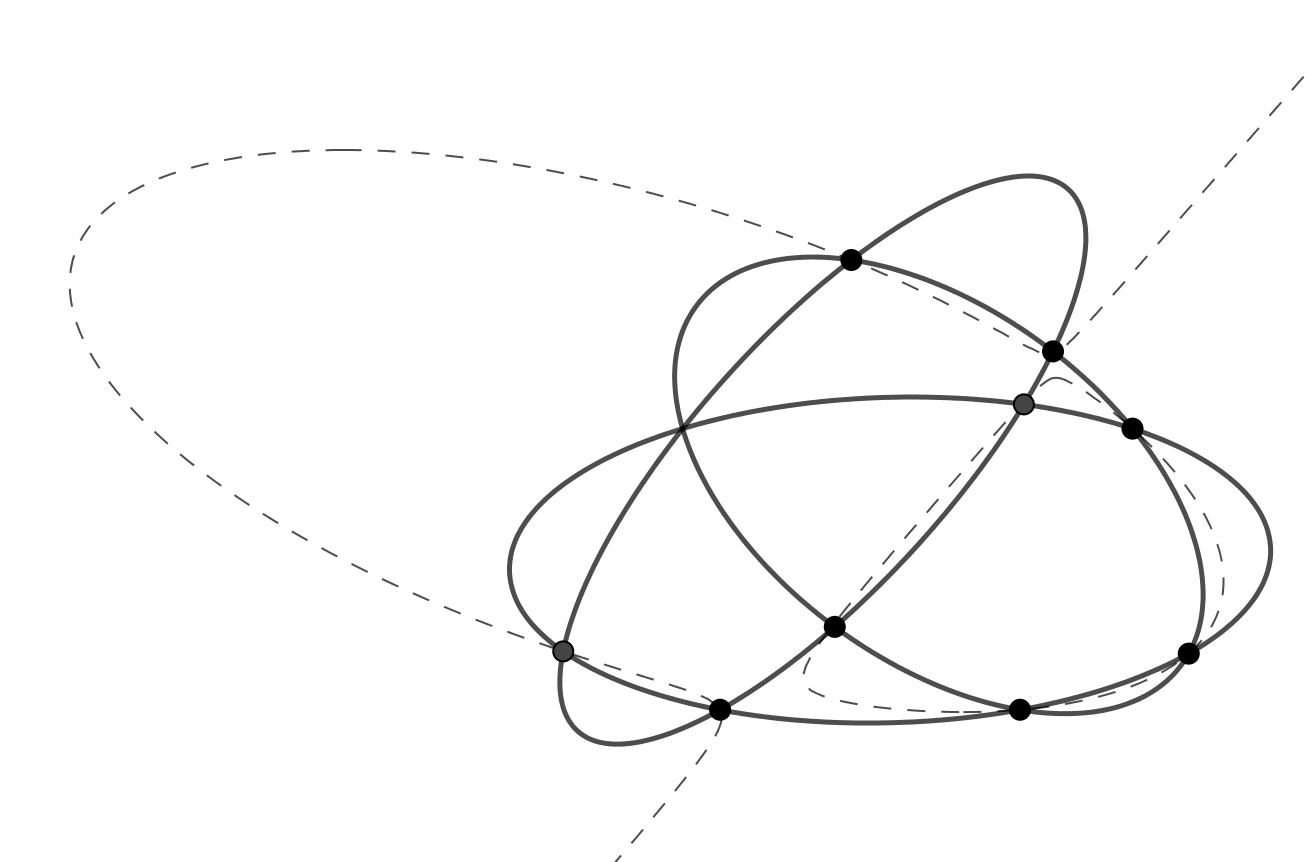}
\caption{The nine base points of a pencil of sextics (i.e., a Halphen pencil of order 2) with a 
Kodaira type IV member (viz., the three concurrent conics) and a smooth cubic
of multiplicity 2 (dashed). The sextic and the cubic together give a curve
with multiplicity sequence $(9,3_{10})$ and genus bound 1.}
\label{HalphenPencilOrd2}
\end{figure}

Finally, we have the union of 9 lines, consisting of the three singular members of the pencil of cubics given
by $x^3-y^3$ and $y^3-z^3$, the other singular cubic in the pencil being the sum, $x^3-z^3$.
This curve cannot be defined over the reals, so we do not provide a sketch,
but the singular points consist of the 9 base points of the pencil plus the 3 coordinate vertices of $\PP^2$.
This curve has multiplicity sequence $(9,3_{12})$. 

\begin{rk}
All of the examples of complex curves with only triple point singularities that we know of fall into one of two categories.
The irreducible curves are homaloidal, and the rest are related to pencils of curves, either a pencil of lines, conics, cubics
or an elliptic pencil. For example, the multiplicity sequence $(3,3)$ is represented by three members of a pencil of lines.
For $(5,3_3)$ we have the union of a conic and three lines.
Adding a line to the conic gives two cubics which define a pencil; the conic is a component of one member of the pencil
and the lines comprise another member. The case $(6,3_4)$ comes from three members of a pencil of conics,
while $(8,3_8)$ comes from taking one component from each of four reducible members of a pencil of cubics.
Next, $(9,3_{10})$ comes from a reducible member of an elliptic pencil and $(9,3_{12})$ is given by 
three reducible members of a pencil of cubics.

A similar situation holds for the examples $(7,3_7)$, which arises in characteristic 2, and the
version of $(9,3_{12})$ which arises in characteristic 3,
except the pencils in question are quasi-elliptic.

The sequence $(7,3_7)$ consists of the lines of the Fano plane (here the characteristic is 2). 
If $p$ is a general point of the plane, there is a unique cuspidal cubic $C$ with its cusp at $p$ 
where $C$ also contains the 7 points of the Fano plane. The 7 points together with $p$ are the base points 
of a unique cubic pencil (the ninth base point of the pencil is infinitely near to $p$). 
The pencil has 7 reducible members, each one consisting of one of the 7 lines 
together with a conic. After blowing up the 9 base points
we get a quasi-elliptic fibration, whose general fiber is a cuspidal cubic and which has 8 reducible fibers
corresponding to the 7 lines with their respective conics and the fiber coming from $C$.

Finally, consider the case $(9,3_{12})$ which arises in characteristic 3.
The 9 points of the finite projective plane over ${\mathbb Z}/3{\mathbb Z}$ away from infinity (i.e., the points
$(a,b,1)$ where $a$ and $b$ are each either $-1$, 0 or 1) determines a quasi-elliptic pencil of cubics with
4 reducible members, each of which consists of three lines which meet at infinity, with one line of each triple
containing the point $(0,0,1)$. Then $(9,3_{12})$ comes from taking these reducible members,
excluding one component from each one (viz., the component that contains $(0,0,1)$) 
and adding in the line at infinity (i.e., $z=0$). In fact, the construction here is very similar to that
of $(8,3_8)$ in characteristic 0, except instead of four irreducible conics, the four conics are reducible
and their singular points are collinear. Adding the line through the four singular points 
increases the degree from 8 to 9 and adds 4 more triple points, thus $(8,3_8)$
becomes $(9,3_{12})$.
\end{rk}

\begin{rk}
If we ignore infinitely near points, there are additional examples of curves
with only triple points. For example, 
let $C$ be the union of three smooth members of the pencil of conics defined by $yz-x^2$ and $x^2$.
If we ignore infinitely near points, this has one singular point, and this point has
multiplicity 3. But in fact, after taking into account infinitely near points, 
its multiplicity sequence is $(6,3_4)$, the same as in Figure \ref{3conics}.

Similarly, the curve $F=x^5+4y^5+4xy^3z+x^2yz^2=0$ is reduced and irreducible,
and has only one singular point, a triple point, if we ignore infinitely near points, but its
multiplicity sequence is $(5,3,2,2,2)$ since it has infinitely near double points. 
It is reduced since one can resolve the triple point
by a sequence of blow ups. It is irreducible since the resolution by blow ups of 
its sole singular point shows the singular point has only two branches, thus if
$C$ were reducible it would have 2 components,
whose tangents would be $x$ and $y$. Neither $x$ nor $y$ divides $F$, so
$F$ must be a conic and a cubic. Since there are only two branches,
the cubic must have a cusp, hence giving the tangent cone $x^2$,
and the conic must have tangent cone $y$.
But then the intersection multiplicity of the components at the triple point
is 2, which means the $C$ would have another singular point elsewhere.
Thus $C$ must be irreducible.
\end{rk}

\begin{rk}
We comment here on the most negative currently known values of $H$-constants.
For irreducible $C$, the infimum for $H(C)$ is at most $-2$ \cite{BDHLPS}, 
but no cases with $H(C)\leq-2$ are known.
For reducible $C$, the infimum for $H(C)$ is at most $-4$ \cite{R},  
but no cases with $H(C)\leq-4$ are known.
For reducible $C$ with only ordinary singularities, 
the infimum for $H(C)$ is at most $-25/7$ \cite{PR}, 
but no cases with $H(C)\leq-25/7$ are known.

For reducible $C$ whose components are all lines, 
the most negative example known has $H(C)=-225/67\approx -3.358$ 
and there is a theoretical bound $H(C)>-4$ \cite{BDHLPS}.
Indeed, by Proposition \ref{genusFormula} we have 
$$H(C)= \frac{d-\sum_{i=1}^rm_i}{r}.$$
In order for this to be $-4$ or less, the excess beyond 4 of the sum of the terms $m_i$ 
with $m_i>4$ must be at least as much the sum of $d$ and the terms $m_i$ equal to 2 or 3.
I.e., $H(C)\leq -4$ if and only if 
$2n_2+n_3+d \leq \sum_{k>4}(k-4)n_k$.
It is easy to see $2n_2+n_3+d > \sum_{k>4}(k-4)n_k$ 
if the lines are concurrent or if all of but one of the lines 
are concurrent. To see that $2n_2+n_3+d > \sum_{k>4}(k-4)n_k$
otherwise, the proof uses a result of Hirzebruch:
$$n_2+(3/4)n_3\geq d+\sum_{k>3}(k-4)n_k.$$
We thus have $2n_2+n_3+d > n_2+(3/4)n_3-d\geq \sum_{k>4}(k-4)n_k$.

In case $C$ has components which have degree at most 2
and all singularities are ordinary, under some mild additional assumptions 
we have $H(C)\geq -4.5$ \cite{PT, PS}, but the most negative value known
is $-225/68\approx -3.309$ \cite{PRS}.
When $C$ is reducible, the components of $C$ are smooth and the singularities are ordinary,
from \cite{PRS} we have $H(C) \geq 4.5d-2.5d^2-4$. 
\end{rk}

\begin{rk}
Only four examples of complex line arrangements are known which have no
points of multiplicity exactly 2 \cite{BDHLPS}. These are: 3 or more concurrent lines;
the curves given by $(x^n-y^n)(x^n-z^n)(y^n-z^n)=0$ for $n>2$;
an arrangement due to F. Klein with 21 lines and $n_3=28$ and $n_4=21$;
and an arrangement due to A. Wiman with 45 lines and $n_3=120$, $n_4=45$ and $n_5=36$.

In contrast, there are lots of singular curves with $n_2=0$, even irreducible curves. 
For example, for each $d>2$ there is a 
reduced rational curve $C$ of degree $d$ with an ordinary singularity
of multiplicity $d-1$. 

For 9 sufficiently general points, there are infinitely many multiplicity sequences
for reduced irreducible rational curves with $n_2=0$, for example,
$(3m(m+1),m^2+2m,(m^2+m)_7,m^2-1)$ for every $m>1$.
One can show that these are all homaloidal, and hence irreducible and rational.
If one blows up the 9 base points of a general pencil of cubics, 
these curves still arise, and the proper transform of each of them
gives a section of the elliptic fibration given by the pencil of cubics.
They are thus points on the generic fiber, and translation under
the group law on the generic fiber gives many additional examples of rational curves
whose image in the plane have no double points,
but the multiplicities of their singular points are not bounded.
\end{rk}


\end{document}